\newtheorem{theorem}{Theorem}[section]
\newtheorem{lemma}[theorem]{Lemma}
\newtheorem{corollary}[theorem]{Corollary}
\theoremstyle{remark}
\newtheorem*{remark}{Remark}
\theoremstyle{definition}
\newtheorem{definition}[theorem]{Definition}
\numberwithin{equation}{section}
\newcommand{\et}{\quad\mbox{and}\quad}
\newcommand{\bN}{\mathbb{N}}
\newcommand{\bQ}{\mathbb{Q}}
\newcommand{\bR}{\mathbb{R}}
\newcommand{\bZ}{\mathbb{Z}}
\newcommand{\cC}{{\mathcal{C}}}
\newcommand{\cF}{{\mathcal{F}}}
\newcommand{\cK}{{\mathcal{K}}}
\newcommand{\cS}{{\mathcal{S}}}
\newcommand{\Deltabar}{\bar{\Delta}}
\newcommand{\disp}{\displaystyle}
\newcommand{\image}{\mathrm{Im}}
\newcommand{\norm}[1]{\|\hspace*{2pt}#1\hspace*{1pt}\|}
\newcommand{\oA}{\bar{A}}
\newcommand{\oB}{\bar{B}}
\newcommand{\omegahat}{\hat{\omega}}
\newcommand{\kbot}{{\underline{k}}}
\newcommand{\ktop}{{\overline{k}}}
\newcommand{\ellbot}{{\underline{\ell}}}
\newcommand{\elltop}{{\overline{\ell}}}
\newcommand{\tcK}{{\tilde{\cK}}}
\newcommand{\ue}{\mathbf{e}}
\newcommand{\uf}{\mathbf{f}}
\newcommand{\uL}{\mathbf{L}}
\newcommand{\uP}{{\mathbf{P}}}
\newcommand{\uR}{{\mathbf{R}}}
\newcommand{\uS}{{\mathbf{S}}}
\newcommand{\uu}{\mathbf{u}}
\newcommand{\ux}{\mathbf{x}}
\newcommand{\uy}{\mathbf{y}}
\newcommand{\rien}[1]{}
\begin{document}

\baselineskip=15.1pt

\title[On Parametric Geometry of Numbers]
{Counter-examples in Parametric Geometry of Numbers}
\author{Martin Rivard-Cooke}
\address{
   D\'epartement de Math\'ematiques\\
   Universit\'e d'Ottawa\\
   150 Louis Pasteur\\
   Ottawa, Ontario K1N 6N5, Canada}
\email{mriva039@uottawa.com}
\author{Damien ROY}
\address{
   D\'epartement de Math\'ematiques\\
   Universit\'e d'Ottawa\\
   150 Louis Pasteur\\
   Ottawa, Ontario K1N 6N5, Canada}
\email{droy@uottawa.ca}

\subjclass[2010]{Primary 11J13; Secondary 11J82}
\keywords{coordinate-wise ordering,
exponents of Diophantine approximation, $n$-systems,
parametric geometry of numbers, semi-algebraic sets}

\begin{abstract}
Thanks to recent advances in parametric geometry of
numbers, we know that the spectrum of any set of $m$
exponents of Diophantine approximation to points
in $\bR^n$ (in a general abstract setting) is a
compact connected subset of $\bR^m$.  Moreover, this
set is semi-algebraic and closed under coordinate-wise
minimum for $n\le 3$.  In this paper, we give examples
showing that for $n\ge 4$ each of the latter properties
may fail.
\end{abstract}

\maketitle

%
%

\section{Introduction}
\label{sec:intro}

The basic object of Diophantine approximation is
rational approximation to points $\uu$ in $\bR^n$.
This is generally measured by elements of the
extended real line $[-\infty,\infty]$ called
exponents of approximation to $\uu$.  The spectrum
of a family of exponents $(\mu_1, \dots, \mu_m)$
is the subset of $[-\infty,\infty]^m$ consisting
of all $m$-tuples $(\mu_1(\uu), \dots, \mu_m(\uu))$
as $\uu$ varies among the points of $\bR^n$ with
linearly independent coordinates over $\bQ$.
In all cases where such a spectrum has been
explicitly determined, its
trace on $\bR^m$ (the set of its finite points)
can be expressed as the set of common solutions
of a finite system of polynomial inequalities
(called transference inequalities).  In particular
this trace is a \emph{semi-algebraic} subset
of $\bR^m$, namely a finite union of such
solution-sets. It is natural to ask if this is
always so.

A general study of spectra is proposed in \cite{R2017}.
It is based on parametric geometry of numbers and
the observation, due to Schmidt and Summerer
\cite{SS2013a}, that the standard exponents of
approximation to a point $\uu\in\bR^n$ can be
computed from the knowledge of the successive
minima of a certain one parameter family of convex
bodies in $\bR^n$.  Using the equivalent formalism
of \cite{R2015}, we choose the family
\[
 \cC_\uu(q)
  := \{\ux\in\bR^n \,;\,
       \norm{\ux}\le 1\ \text{and}\ |\ux\cdot\uu|\le e^{-q}\}
 \quad (q\ge 0),
\]
where $\ux\cdot\uu$ denotes the usual scalar
product of $\ux$ and $\uu$, and where
$\norm{\ux}=|\ux\cdot\ux|^{1/2}$ is the Euclidean
norm of $\ux$.  For each integer $i$ with
$1\le i\le n$ and each $q\ge 0$, we define
$L_i(q)=\log\lambda_i(q)$ where $\lambda_i(q)$
is the $i$-th minimum of $\cC_\uu(q)$ with respect
to $\bZ^n$, that is the smallest real number
$\lambda\ge 1$ such that $\lambda\cC_\uu(q)$
contains at least $i$ linearly independent points
of $\bZ^n$.  Let $\uL_\uu\colon[0,\infty)\to\bR^n$
be the map given by
\[
 \uL_\uu(q) = (L_1(q),\dots,L_n(q)) \quad (q\ge 0).
\]
Then, each standard exponent of approximation
to $\uu$ can be computed as a linear fractional
function of the quantity
\begin{equation}
\label{intro:eq1}
 \mu_T(\uL_\uu):=\liminf_{q\to\infty} \frac{1}{q}T(\uL_\uu(q))
\end{equation}
for some non-zero linear form $T\colon\bR^n\to\bR$.
For example, as it is explained in \cite{R2016}, the
exponents $\omega_{d-1}(\uu)$ and $\omegahat_{d-1}(\uu)$
introduced by Laurent in \cite{La2009b} for each
integer $d$ with $1\le d\le n-1$, which provide measures
of approximation to $\uu$ by subspaces of $\bR^n$ of
dimension $d$ defined over $\bQ$, can be computed as
\[
 \omega_{d-1}(\uu)=\mu_T(\uL_\uu)^{-1}-1
 \et
 \omegahat_{d-1}(\uu)=-\mu_{-T}(\uL_\uu)^{-1}-1
\]
for the linear form $T=\psi_{n-d}$ given by
$\psi_{n-d}(x_1,\dots,x_n)=\sum_{i=1}^{n-d}x_i$.  This observation
is used in \cite{R2017} to attach an abstract spectrum to each
linear map $T=(T_1,\dots,T_m)\colon\bR^n\to\bR^m$.  It is denoted
$\image^*(\mu_T)$ and consists of all $m$-tuples
\begin{equation}
 \label{intro:eq2}
 \mu_T(\uL_\uu):=(\mu_{T_1}(\uL_\uu),\dots,\mu_{T_m}(\uL_\uu))
\end{equation}
where $\uu$ runs through the points of $\bR^n$ with linearly
independent coordinates over $\bQ$.  We refer the reader to
\cite{R2017} for a short description of the known spectra prior
to 2018. To this list, we should now add the recent breakthrough
of Marnat and Moschevitin \cite{MM2019} who determined the spectra
of the pairs $(\omega_0,\omegahat_0)$ and
$(\omega_{n-2},\omegahat_{n-2})$ by a combination of classical
arguments and of parametric geometry of numbers, thereby proving
a conjecture proposed by Schmidt and Summerer
in \cite[\S 3]{SS2013b}. We also refer
to \cite[Chapter 2]{Ri2019} for a short alternative proof of
this result based only on parametric geometry of numbers together
with a general conjecture about the spectra of the pairs
$(\omega_d,\omegahat_d)$ with $0\le d\le n-2$ and a proof of that
conjecture for $n=4$.

In \cite{R2017} it is shown that, for each linear map
$T\colon\bR^n\to\bR^m$, the spectrum $\cS=\image^*(\mu_T)$ is a
compact connected subset of $\bR^m$ and that, when $n\le 3$,
it is semi-algebraic and closed under coordinate-wise minimum.
The last property means that for any two points
$\ux=(x_1,\dots,x_m)$ and $\uy=(y_1,\dots,y_m)$ in
$\cS$, the point
\begin{equation}
 \label{intro:eq:min}
 \min\{\ux,\uy\}=(\min\{x_1,y_1\},\dots,\min\{x_m,y_m\})
\end{equation}
also belongs to $\cS$.  In this paper we show that both of
these properties fail for $n\ge 4$.  Our counter-examples
involve linear maps $T\colon\bR^n\to\bR^m$ with $m=n+1$.
It would be interesting to know, for given $n\ge 4$, what
is the smallest value of $m$ for which there exists a
linear map $T\colon\bR^n\to\bR^m$ whose corresponding
spectrum is not a semi-algebraic subset of $\bR^m$ or
is not closed under coordinate-wise minimum.
In particular, we wonder if such counter-examples exist
with $m=2$ and, more precisely, if one could take
$T=(F,-F)\colon\bR^n\to\bR^2$ for some linear form $F$ on
$\bR^n$.

%
%

\section{Parametric geometry of numbers}
\label{sec:pgn}

Fix an integer $n\ge 2$.  The main theorem of parametric
geometry of numbers \cite[Theorem 1.3]{R2015} asserts
that, modulo bounded functions, the classes of maps $\uL_\uu$
attached to points $\uu$ in $\bR^n$ are the same as the
classes of rigid $n$-systems of mesh $\delta$ for any given
$\delta\ge 0$.  There are several equivalent ways of defining
an $n$-system (also called $(n,0)$-systems).  One of them
is \cite[Definition 2.8]{R2015} (with $\gamma=0$).  Here we
choose the simpler Definition 2.1 of \cite{R2017} where
\[
 \ue_1=(1,0,\dots,0),\dots,\ue_n=(0,\dots,0,1)
\]
denote the elements of the canonical basis of $\bR^n$.

\begin{definition}
\label{pgn:def:n-system}
Let $I$ be a closed subinterval of $[0,\infty)$ with
non-empty interior.  An $n$-system on $I$ is a map
$\uP=(P_1,\dots,P_n)\colon I \to \bR^n$ with the
property that, for any $q\in I$:
\begin{itemize}
\item[(S1)] $0\le P_1(q)\le\cdots\le P_n(q)$ and
  $P_1(q)+\cdots+P_n(q)=q$;
\smallskip
\item[(S2)] there exist $\epsilon>0$ and integers
  $k,\ell\in\{1,\dots,n\}$ such that
  \[
   \uP(t)=\begin{cases}
         \uP(q)+(t-q)\ue_\ell
          &\text{for any $t\in I\cap[q-\epsilon,q]$,}\\
         \uP(q)+(t-q)\ue_k
          &\text{for any $t\in I\cap[q,q+\epsilon]$;}
         \end{cases}
   \]
\item[(S3)] if $q$ is in the interior of $I$ and if the
  integers $k$ and $\ell$ from (S2) satisfy $k>\ell$,
  then $P_\ell(q)=\cdots=P_k(q)$.
\end{itemize}
We say that $\uP$ is \emph{proper} if $I=[q_0,\infty)$ for some
$q_0\ge 0$ and if $\lim_{q\to\infty}P_1(q)=\infty$.
\end{definition}

As suggested by Luca Ghidelli, one can view an $n$-system
on $[0,\infty)$ as describing a ball game with $n$ players
$P_1,\dots,P_n$ moving on the real line as a function of
the time according to the following rules.
\begin{itemize}
 \item[(R1)] At time $t=0$, all players are at position $0$.
 \item[(R2)] No player can pass another one so that, at any time
   $t\ge 0$, their order remains $P_1\le\cdots\le P_n$.
 \item[(R3)] The only player that can move is the one who
   carries the ball and that player moves with constant speed $1$.
 \item[(R4)] A player can only pass the ball to a player
   that is behind or next to him/her.
\end{itemize}
Indeed, for $I=[0,\infty)$, the rules (R1) to (R3) codify
(S1) and (S2) while (R4) codifies (S3), assuming that the ball
moves instantaneously.  This interpretation
is useful in many ways.  For example, when $n\ge 3$, we obtain
an $(n-1)$-system out of an $n$-system by considering
only the positions of $P_1,\dots,P_{n-1}$ and by stopping
the time counter when $P_n$ has the ball.  Another way
is to consider only the positions of $P_2,\dots,P_{n}$ and
to stop counting the time when $P_1$ has the ball.

Let $\uP=(P_1,\dots,P_n)$ be an $n$-system on an interval
$I$ as in Definition \ref{pgn:def:n-system}.  Following the
terminology of Schmidt and Summerer in \cite{SS2013a}, the
\emph{division numbers} of $\uP$ are the boundary points of $I$
and the interior points $q$ of $I$ at which $\uP$ is not
differentiable, namely those for which we have $k\neq \ell$
in (S2).  The \emph{switch numbers} of $\uP$ are the boundary
points of $I$ and the interior points $q$ of $I$ for which
we have $k<\ell$ in (S2).  The \emph{division points} of $\uP$
(resp.\ the \emph{switch points} of $\uP$) are the values of $\uP$ at
its division numbers (resp.\ switch numbers).  When $I=[0,\infty)$,
the non-zero division points of $\uP$ represent the positions
of the players when the ball is passed from a player to another
one, and the non-zero switch points of $\uP$ represent their
positions when the ball is passed from a player to another
one behind.

\begin{definition}
Let $\delta>0$ and let $q_0\ge 0$.  We say that an $n$-system
$\uP$ on $[q_0,\infty)$ is \emph{rigid of mesh $\delta$} if each
non-zero switch point of $\uP$ has $n$ distinct coordinates
and if these coordinates are integer multiples of $\delta$.
\end{definition}

Equivalently, an $n$-system $\uP\colon[q_0,\infty)\to\bR^n$ is
rigid of mesh $\delta$ if $q_0\in\delta\bZ$, if $\uP(q)\in\delta\bZ^n$
for each $q\in\delta\bZ$ with $q\ge q_0$, and if for $q=q_0$
and each $q\in(q_0,\infty)\setminus\delta\bZ$ the point $\uP(q)$ has
$n$ distinct coordinates.  In particular, the division numbers of
such a system belong to $\delta\bZ$.

The present paper relies on the following consequence of
Theorems 8.1 and 8.2 of \cite{R2015} to which we alluded at
the beginning of the section.

\begin{theorem}
\label{pgn:thm:approxLbyP}
For each non-zero point $\uu$ in $\bR^n$ and each $\delta>0$, there
exist $q_0\in\delta\bZ$ with $q_0\ge 0$ and a rigid $n$-system $\uP$
of mesh $\delta$ on $[q_0,\infty)$ such that $\uP-\uL_\uu$
is bounded on $[q_0,\infty)$.  Conversely, for any $q_0\ge 0$
and any $n$-system $\uP$ on $[q_0,\infty)$, there exists a non-zero
point $\uu\in\bR^n$ such that $\uP-\uL_\uu$ is bounded on
$[q_0,\infty)$.  The point $\uu$ has $\bQ$-linearly independent
coordinates if and only if the map $\uP$ is proper.
\end{theorem}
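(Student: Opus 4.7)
The plan is to appeal directly to Theorems 8.1 and 8.2 of \cite{R2015}, and then to dispose of the final equivalence (properness versus $\bQ$-linear independence of the coordinates) by a short direct argument. As a preliminary step, I would note that the notion of $n$-system from Definition \ref{pgn:def:n-system}, taken from \cite[Definition 2.1]{R2017}, agrees with the one used in \cite[Definition 2.8]{R2015} when $\gamma = 0$; this compatibility is recorded in \cite{R2017} and need only be invoked.

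Given a non-zero $\uu \in \bR^n$ and $\delta > 0$, Theorem 8.1 of \cite{R2015} produces a rigid $n$-system $\uP$ of mesh $\delta$ on some interval $[q_0,\infty)$, with $q_0 \in \delta\bZ\cap[0,\infty)$, such that $\uP-\uL_\uu$ is bounded on $[q_0,\infty)$; this yields the first assertion. Conversely, for any $q_0 \ge 0$ and any $n$-system $\uP$ on $[q_0,\infty)$, Theorem 8.2 of \cite{R2015} supplies a non-zero $\uu \in \bR^n$ for which $\uP-\uL_\uu$ is bounded. These two citations furnish the existence statements with no further work.

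To finish, I would establish that properness of $\uP$, i.e.\ $P_1(q)\to\infty$, is equivalent to $L_1(q)\to\infty$ (since $P_1-L_1$ is bounded), and that this in turn is equivalent to the $\bQ$-linear independence of the coordinates of $\uu$. For the latter, if $\ux\cdot\uu=0$ for some non-zero $\ux\in\bZ^n$, then $\ux\in\lambda\,\cC_\uu(q)$ for every $q\ge 0$ and every $\lambda\ge\max\{1,\norm{\ux}\}$, so $\lambda_1(q)$ stays uniformly bounded and $L_1$ is bounded. Conversely, if $L_1(q)\le C$ holds along an unbounded set of $q$, a pigeonhole argument on the finite set $\{\ux\in\bZ^n\setminus\{0\} : \norm{\ux}\le e^C\}$ produces a single $\ux$ satisfying $|\ux\cdot\uu|\le e^{C-q}$ for arbitrarily large $q$, forcing $\ux\cdot\uu=0$.

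The main obstacle in this plan lies not here but inside \cite{R2015}, where the heavy construction of rigid $n$-systems from successive minima (and vice versa) is carried out. Locally, all that is required is the definitional compatibility noted at the outset and the short compactness argument above, so the proof should be brief.
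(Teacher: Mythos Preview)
Your proposal is correct and follows essentially the same approach as the paper: the paper likewise cites Theorems 8.1 and 8.2 of \cite{R2015} for the two existence statements and then remarks, without further detail, that the last assertion follows because $\uu$ has $\bQ$-linearly independent coordinates if and only if the first component of $\uL_\uu$ is unbounded. Your write-up simply makes this last point explicit via the short pigeonhole argument, which is a welcome addition but not a different method.
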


The last assertion follows from the preceding ones
based on the fact that a point $\uu$ in $\bR^n$ has
$\bQ$-linearly independent coordinates if and only if
the first component of the map $\uL_\uu$ is unbounded
(i.e.\ if $\lim_{q\to\infty}\lambda_1(\cC_\uu(q))=\infty$).

It is interesting to compare the above notion of an
$n$-system to that of an
$1\times(n-1)$-template according to Das, Fishman, Simmons
and Urba\'nski in \cite[Definition 2.1]{DFSU}.  Adapted to
our present context, it becomes exactly the notion of a
generalized $n$-system as in \cite[Definition 4.5]{R2016}.
The formulation given below follows the clever and
concise definition of a template by the four authors.

\begin{definition}
\label{pgn:def:templates}
Let $I$ be a closed subinterval of $[0,\infty)$ with
non-empty interior.  A generalized $n$-system on
$I$ is a continuous piecewise linear map
$\uP=(P_1,\dots,P_n)\colon I \to \bR^n$
with the following properties.
\begin{itemize}
\item[(G1)] For each $q\in I$, we have
  $0\le P_1(q)\le\cdots\le P_n(q)$
  and
  $P_1(q)+\cdots+P_n(q)=q$.
\item[(G2)] For each $j=1,\dots,n$, the component
  $P_j\colon I\to\bR$ is both monotone increasing and $1$-Lipschitz.
\item[(G3)] For each $j=1,\dots,n-1$ and each subinterval
  $H$ of $I$ on which $P_j<P_{j+1}$, the
  sum $P_1+\cdots+P_j$ is convex on $H$ with
  slopes in $\{0,1\}$.
\end{itemize}
We say that $\uP$ is \emph{proper} if $I=[q_0,\infty)$ for some
$q_0\ge 0$ and if $\lim_{q\to\infty}P_1(q)=\infty$.
\end{definition}

Recall that a function $f\colon I\to\bR$ is $1$-Lipschitz
if it satisfies $f(b)-f(a)\le b-a$ for any $a,b\in I$ with
$a\le b$.  So (G2) amounts to asking that each $P_j$ has
slopes belonging to $[0,1]$.

To analyze this definition and compare it to
\cite[Definition 4.5]{R2016}, fix such a map $\uP$.
Set $M_0=0$ and $M_j=P_1+\cdots+P_j$ for each
$j=1,\dots,n$.  Then, consider a non-empty open
subinterval $H$ of $I$ on which $\uP$ is
affine.  For each $j=1,\dots,n-1$, we have either
$P_j=P_{j+1}$ or $P_j<P_{j+1}$ on the whole of $H$.
In the latter case, $M_j$ has constant slope $0$ or
$1$ on $H$ by (G3).  Let $\kbot\ge 1$ be the largest
index for which $M_{\kbot-1}$ has slope $0$ on $H$,
and let $\ktop\le n$ be the smallest one for which
$M_{\ktop}$ has slope $1$ on $H$.  Then, for each
index $j$ with $\kbot\le j<\ktop$, the function
$M_j$ has constant slope $M_j'\in(0,1)$ (because of (G2)),
and so $P_j=P_{j+1}$ on $H$.  Thus
$P_{\kbot},\dots,P_{\ktop}$ coincide and have slope
$1/(\ktop-\kbot+1)$ on $H$ while all other components
of $\uP$ are constant on $H$.

Now, consider an
interior point $q$ of $I$ at which $\uP$ is not
differentiable and choose $\epsilon>0$ such that
$\uP$ is defined and differentiable on both
$(q-\epsilon,q)$ and $(q,q+\epsilon)$.  For each
$j=1,\dots,n-1$ such that $P_j(q)<P_{j+1}(q)$, we
have $P_j<P_{j+1}$ on $(q-\epsilon,q+\epsilon)$
and so $M_j$ is convex with slopes in $\{0,1\}$
on that interval: either it has constant slope
on $(q-\epsilon,q+\epsilon)$ or else it has slope
$0$ on $(q-\epsilon,q)$ and slope $1$ on
$(q,q+\epsilon)$.  Let
$\ellbot\le\elltop$ and $\kbot\le\ktop$ be the
indices for which $P_{\ellbot}=\cdots=P_{\elltop}$
have slope $1/(\elltop-\ellbot+1)$ on $(q-\epsilon,q)$,
and $P_{\kbot}=\cdots=P_{\ktop}$
have slope $1/(\ktop-\kbot+1)$ on $(q,q+\epsilon)$.
Then we have
\[
 P_{\ellbot}(q)=\cdots=P_{\ktop}(q)
 \quad
 \text{if}
 \quad
 \ellbot<\ktop
\]
because for each $j$ with $\ellbot\le j<\ktop$,
the function $M_j$ has slope $>0$ on $(q-\epsilon,q)$
and slope $<1$ on $(q,q+\epsilon)$, and so
$P_j(q)=P_{j+1}(q)$ by a previous observation.

A generalized $n$-system on $[0,\infty)$ can
therefore be viewed as describing a ball game where
several players may carry the ball together
(like young children generally do).
We keep the same rules (R1) and (R2) but replace (R3)
and (R4) by the following weaker rules.
\begin{itemize}
 \item[(R3')] Only the players that carry the ball can move,
   and they move together at speed $1/m$ where $m$ is the
   size of their group.
 \item[(R4')] The group of players carrying the ball can only
   pass the ball to a group of players that are waiting behind
   them or are next to them.
\end{itemize}
It follows from this interpretation that each $n$-system is a
generalized $n$-system and that any generalized
$n$-system is a uniform limit of $n$-systems (see
\cite[Corollary 4.7]{R2016}).  Thus Theorem \ref{pgn:thm:approxLbyP}
admits the following complement.

\begin{theorem}
\label{pgn:thm:approxGen}
For any $q_0\ge 0$ and any generalized $n$-system $\uP$
on $[q_0,\infty)$, there exists a non-zero
point $\uu\in\bR^n$ such that $\uP-\uL_\uu$ is bounded on
$[q_0,\infty)$.  The point $\uu$ has $\bQ$-linearly independent
coordinates if and only if the map $\uP$ is proper.
\end{theorem}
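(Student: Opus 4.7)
The plan is to reduce Theorem \ref{pgn:thm:approxGen} to Theorem \ref{pgn:thm:approxLbyP} by exploiting the fact, asserted just before the statement, that every generalized $n$-system on $[q_0,\infty)$ is a uniform limit of ordinary $n$-systems (\cite[Corollary 4.7]{R2016}). Given a generalized $n$-system $\uP$ on $[q_0,\infty)$, I would extract from that result a single ordinary $n$-system $\tuP$ on $[q_0,\infty)$ satisfying $\sup_{q\ge q_0}\norm{\tuP(q)-\uP(q)} \le 1$ (any fixed tolerance would do). Theorem \ref{pgn:thm:approxLbyP} applied to $\tuP$ then produces a non-zero point $\uu\in\bR^n$ for which $\uL_\uu - \tuP$ is bounded on $[q_0,\infty)$, and the triangle inequality gives the desired bounded difference $\uL_\uu - \uP$.

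For the final assertion, I would invoke the elementary observation that two functions $f,g\colon[q_0,\infty)\to\bR$ at bounded distance satisfy $\lim_{q\to\infty}f(q) = \infty$ simultaneously. Applied to the first coordinate functions of $\uP$, $\tuP$, and $\uL_\uu$, this yields a chain of equivalences showing that $\uP$ is proper iff $\tuP$ is proper iff, by the last assertion of Theorem \ref{pgn:thm:approxLbyP}, the point $\uu$ has $\bQ$-linearly independent coordinates.

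The main subtlety lies in verifying that \cite[Corollary 4.7]{R2016} produces a single approximating $n$-system that is uniformly close to $\uP$ on all of $[q_0,\infty)$, rather than merely on compact subintervals. Should the cited statement only provide the latter, the ball-game interpretation furnishes a direct remedy: on each maximal interval where a group of $m$ players share the ball in $\uP$, one can have them take turns carrying the ball in $\tuP$ in rounds of length $m$, so that after each completed round every player in the group has advanced by exactly the same amount as in $\uP$ and, within a round, each player's position deviates from its $\uP$-value by at most a constant depending only on $n$. The resulting $n$-system $\tuP$ is then at bounded uniform distance from $\uP$ on the entire half-line, and the remainder of the argument proceeds unchanged.
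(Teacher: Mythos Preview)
Your proposal is correct and matches the paper's approach: the paper does not give a detailed proof but simply notes, just before the statement, that every generalized $n$-system is a uniform limit of $n$-systems (citing \cite[Corollary~4.7]{R2016}) and then says ``Thus Theorem~\ref{pgn:thm:approxLbyP} admits the following complement,'' which is exactly the reduction you carry out. Your handling of the last assertion via the chain of equivalences through $\tuP$ is also in line with the paper's remark (after Theorem~\ref{pgn:thm:approxLbyP}) that $\uu$ has $\bQ$-linearly independent coordinates if and only if the first component of $\uL_\uu$ is unbounded.
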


The fact that an $n$-system has property (G3) is very useful
and we will often use it in Sections \ref{sec:min} and
\ref{sec:non-semi-alg}.
In terms of a team of players following the rules (R1)--(R4),
it simply expresses the fact that, for a given integer $j$
with $1\le j<n$, when one of $P_1,\dots,P_j$ gets the ball,
the ball remains within that group until $P_j$ meets $P_{j+1}$
with the ball.

%
%

\section{Computing spectra from $n$-systems}
\label{sec:CompSpec}

Let $n\ge 2$ be an integer.  For any $q_0\ge 0$ and any
Lipschitz map
$\uP\colon [q_0,\infty)\to\bR^n$, we denote by
$\cF(\uP)$ the set of accumulation points of the quotients
$q^{-1}\uP(q)$ as $q$ goes to infinity, and define $\cK(\uP)$
to be the convex hull of $\cF(\uP)$, as in \cite[\S 3]{R2017}.
When $\uP$ is an $n$-system or a generalized $n$-system, the set
$\cF(\uP)$ is contained in
\[
 \Deltabar
   :=\{(x_1,\dots,x_n)\in\bR^n\,;\,
         0\le x_1\le\cdots\le x_n
         \text{ and }
         x_1+\cdots+x_n=1\}.
\]
As this is a compact convex subset of $\bR^n$, both sets $\cF(\uP)$
and $\cK(\uP)$ are then compact subsets of $\Deltabar$.

For each integer $m\ge 1$, we equip $\bR^m$ with the
coordinate-wise ordering where
\[
 (x_1,\dots,x_m) \le (y_1,\dots,y_m)
 \quad\Longleftrightarrow\quad
 x_1\le y_1,\,\dots,\,x_m\le y_m.
\]
For that partial order, the minimum of two points is their coordinate-wise
minimum as defined in the introduction, and every bounded
subset of $\bR^m$ has an infimum in $\bR^m$.  Then, for any linear
map $T=(T_1,\dots,T_m)\colon\bR^n\to\bR^m$ and any Lipschitz map
$\uP\colon[q_0,\infty)\to\bR^n$, we define
\begin{align*}
 \mu_T(\uP)
   = \inf T(\cK(P))
   &= \inf T(\cF(\uP))\\
   &=\liminf_{q\to\infty} q^{-1}T(\uP(q))\\
   &=\big( \liminf_{q\to\infty} q^{-1}T_1(\uP(q)),
          \dots,
          \liminf_{q\to\infty} q^{-1}T_m(\uP(q)) \big)
\end{align*}
as in \cite[\S3]{R2017}.  In view of Theorems \ref{pgn:thm:approxLbyP}
and \ref{pgn:thm:approxGen}, the computation of a spectrum is reduced
to a problem about maps of a combinatorial nature.

\begin{theorem}
\label{comput:thm}
Let $\delta>0$.  The spectrum $\image^*(\mu_T)$ of a linear
map $T\colon\bR^n\to\bR^m$ is the set of all numbers $\mu_T(\uP)$
where $\uP\colon[q_0,\infty)\to\bR^n$ is a proper rigid $n$-system
of mesh $\delta$ (resp.~a proper $n$-system, resp.~a proper
generalized $n$-system).
\end{theorem}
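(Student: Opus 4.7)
The plan is to derive the three claimed equalities from a single chain of inclusions, resting on the two approximation Theorems \ref{pgn:thm:approxLbyP} and \ref{pgn:thm:approxGen} together with the elementary observation that $\mu_T$ is invariant under bounded perturbations of its argument.

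The invariance step is immediate: if $\uP,\uQ\colon[q_0,\infty)\to\bR^n$ satisfy that $\uP-\uQ$ is bounded, then for each component $T_i$ of $T$ we have
\[
 \frac{1}{q}T_i(\uP(q))-\frac{1}{q}T_i(\uQ(q))=\frac{1}{q}T_i(\uP(q)-\uQ(q))\xrightarrow{q\to\infty} 0
\]
by linearity of $T_i$, so the liminfs agree and $\mu_T(\uP)=\mu_T(\uQ)$. Next, I would exploit the set-theoretic containments between the three classes of maps: every rigid $n$-system of mesh $\delta$ is an $n$-system, and (as discussed before Theorem \ref{pgn:thm:approxGen}) every $n$-system is a generalized $n$-system, with properness preserved throughout. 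Denoting by $\cS_\delta$, $\cS_{\mathrm{syst}}$, $\cS_{\mathrm{gen}}$ the three sets that appear on the right-hand side of the statement, this immediately yields $\cS_\delta\subseteq\cS_{\mathrm{syst}}\subseteq\cS_{\mathrm{gen}}$, so the proof reduces to establishing $\image^*(\mu_T)\subseteq\cS_\delta$ and $\cS_{\mathrm{gen}}\subseteq\image^*(\mu_T)$.

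For the first inclusion, given $\uu\in\bR^n$ with $\bQ$-linearly independent coordinates, the first part of Theorem \ref{pgn:thm:approxLbyP} produces a rigid $n$-system $\uP$ of mesh $\delta$ on some $[q_0,\infty)$ with $\uP-\uL_\uu$ bounded, and $\uP$ is proper by the last assertion of that theorem; invariance then gives $\mu_T(\uL_\uu)=\mu_T(\uP)\in\cS_\delta$. For the second inclusion, given a proper generalized $n$-system $\uP$, Theorem \ref{pgn:thm:approxGen} yields a non-zero $\uu\in\bR^n$ such that $\uL_\uu-\uP$ is bounded, and properness of $\uP$ forces the coordinates of $\uu$ to be $\bQ$-linearly independent; invariance again gives $\mu_T(\uP)=\mu_T(\uL_\uu)\in\image^*(\mu_T)$.

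No genuine obstacle arises here: the theorem is essentially a repackaging of the two approximation results from Section \ref{sec:pgn}, and the only substantive content beyond that is the one-line invariance computation. The only minor point requiring attention is keeping track of the \emph{proper} condition throughout, so that the correspondence between points with $\bQ$-linearly independent coordinates and proper systems lines up exactly on both sides of each inclusion.
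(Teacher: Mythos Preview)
Your argument is correct and matches the paper's approach: the paper does not spell out a proof but presents the theorem as an immediate consequence of Theorems \ref{pgn:thm:approxLbyP} and \ref{pgn:thm:approxGen}, which is precisely the derivation you give. The only content you add beyond what the paper states explicitly is the invariance of $\mu_T$ under bounded perturbations, which is indeed the trivial missing link.
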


For the purpose of this paper, we will need the following facts.

\begin{lemma}
\label{comput:lemma}
Let $q_0\ge 0$ and let $\uP\colon[q_0,\infty)\to\bR^n$ be a proper
generalized $n$-system.
\begin{itemize}
\item[(i)] Let $w_1<w_2<\cdots$ denote the points of $(q_0,\infty)$ at
which $\uP$ is not differentiable, listed in increasing order, and let
$E$ be the set of limit points of the sequence
$(w_i^{-1}\uP(w_i))_{i\ge 1}$.  Then $\cK(\uP)$ is the convex hull
of $E$.
\item[(ii)] For each $\delta>0$ there exists $Q_\delta\in (q_0,\infty)$
such that for each $q\ge Q_\delta$ we have
  \[
    q^{-1}\uP(q)
     \in\cF(\uP)+[-\delta,\delta]^n
        :=\{\ux\in\bR^n\,;\, \norm{\ux-\uy}\le \delta
            \text{ for some } \uy\in\cF(\uP)\}.
  \]
\item[(iii)] If there exists $\rho>1$ such that $\uP(\rho q)=\rho\uP(q)$
  for each $q\ge q_0$, then we have
  \[
    \cF(\uP)=\{q^{-1}\uP(q)\,;\,q_0\le q\le\rho q_0\}.
  \]
\end{itemize}
\end{lemma}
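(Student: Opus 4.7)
For part (i), my plan is to establish the two inclusions $\mathrm{conv}(E)\subseteq\cK(\uP)$ and $\cK(\uP)\subseteq\mathrm{conv}(E)$ separately. The first is immediate: every $p\in E$ belongs to $\cF(\uP)$ because the $w_i$ tend to infinity, so $\mathrm{conv}(E)\subseteq\mathrm{conv}(\cF(\uP))=\cK(\uP)$. For the reverse, it suffices to show $\cF(\uP)\subseteq\mathrm{conv}(E)$. The key observation is that $\uP$ is affine on each interval $[w_i,w_{i+1}]$, so the linear interpolation
\[
 \uP(q)=\frac{w_{i+1}-q}{w_{i+1}-w_i}\uP(w_i)+\frac{q-w_i}{w_{i+1}-w_i}\uP(w_{i+1})
\]
rearranges, after dividing by $q$ and inserting factors $w_i/w_i$ and $w_{i+1}/w_{i+1}$, into an explicit convex combination
\[
 \frac{\uP(q)}{q}=\alpha_i(q)\,\frac{\uP(w_i)}{w_i}+(1-\alpha_i(q))\,\frac{\uP(w_{i+1})}{w_{i+1}},
\]
with $\alpha_i(q)=(w_{i+1}-q)w_i/((w_{i+1}-w_i)q)\in[0,1]$ (a direct check shows the coefficients sum to $1$). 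Given $\uw\in\cF(\uP)$ realized by a sequence $q_j\to\infty$ with $q_j\in[w_{i_j},w_{i_j+1}]$, the indices $i_j$ tend to infinity, so Bolzano--Weierstrass applied in the compact set $\Deltabar\times\Deltabar\times[0,1]$ yields a subsequence along which $w_{i_j}^{-1}\uP(w_{i_j})$ and $w_{i_j+1}^{-1}\uP(w_{i_j+1})$ converge to points of $E$ and $\alpha_{i_j}(q_j)$ converges in $[0,1]$, exhibiting $\uw$ as a convex combination of two elements of $E$.

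Part (ii) is a routine compactness argument: if the conclusion fails, there exist $\delta>0$ and $q_j\to\infty$ with $q_j^{-1}\uP(q_j)$ outside the $\delta$-neighborhood of $\cF(\uP)$; since these points all lie in the compact simplex $\Deltabar$, a subsequence converges to some $\uw\in\Deltabar$, and then $\uw\in\cF(\uP)$ by definition, a contradiction. For part (iii), the self-similarity $\uP(\rho q)=\rho\uP(q)$ reads $g(\rho q)=g(q)$ for the continuous map $g(q):=q^{-1}\uP(q)$, so the image of $g$ on $[q_0,\infty)$ equals the compact set $S:=g([q_0,\rho q_0])$. Each $g(q_*)\in S$ is already an accumulation point since $g(\rho^k q_*)=g(q_*)$ and $\rho^k q_*\to\infty$, giving $S\subseteq\cF(\uP)$; conversely, any $p=\lim_j g(q_j)\in\cF(\uP)$ is handled by writing $q_j=\rho^{k_j}q_j'$ with $q_j'\in[q_0,\rho q_0]$, extracting a convergent subsequence $q_j'\to q_*'$, and using continuity of $g$ to conclude $p=g(q_*')\in S$.

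The main obstacle is the algebraic manipulation in part (i) producing the explicit convex combination; once this is in hand, the rest of (i) reduces to a compactness extraction and parts (ii)--(iii) are soft topological arguments. One technicality worth flagging is the edge case in (i) where $\uP$ has only finitely many division points, so the sequence $(w_i)$ is finite and $E=\emptyset$ while $\cF(\uP)$ may be a singleton equal to the terminal slope: the statement as written seems to require either an implicit hypothesis of infinitely many non-differentiable points or a small adjustment. Since the $n$-systems arising in the applications of this paper have infinitely many division points, this edge case can safely be set aside.
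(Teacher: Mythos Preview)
Your proof is correct and takes essentially the same approach as the paper, which sketches (i) by noting that each affine piece of $\uP$ on $[w_i,w_{i+1}]$ yields a line segment $\{t^{-1}\uP(t):w_i\le t\le w_{i+1}\}$ joining consecutive normalized division points, handles (ii) by a compactness argument, and deems (iii) clear from the definition; you simply spell these steps out in full, including the explicit convex-combination coefficients. Your remark on the edge case of finitely many division points is apt: the paper's phrasing tacitly assumes there are infinitely many.
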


The property (i) is proved for proper $n$-systems in
\cite[Proposition 3.2]{R2017} but the proof extends to
generalized $n$-systems as it relies simply on the fact that,
for each $i\ge 1$, the restriction of $\uP$ to $[w_i,w_{i+1}]$ is
an affine map and thus $\{t^{-1}\uP(t)\,;\,w_i\le t\le w_{i+1}\}$
is the line segment joining $w_i^{-1}\uP(w_i)$ to
$w_{i+1}^{-1}\uP(w_{i+1})$ in $\Deltabar$.  Similarly, (ii) is
proved for $n$-systems in \cite[Lemma 4.1]{R2017}
but the proof, based on a compactness argument, applies in fact to
any Lipschitz map.  Finally (iii) is clear from the definition.
A generalized $n$-system $\uP$ which satisfies the condition
in (iii) for some $\rho>1$ is called \emph{self-similar}.

%
%

\section{Examples of spectra which are not closed under the minimum}
\label{sec:min}

For simplicity, we only give an example in dimension $n=4$.
We will construct proper generalized $4$-systems $\uR$
and $\uS$, and a linear map $T\colon\bR^4\to\bR^5$ such that
$\min\{\mu_T(\uR),\mu_T(\uS)\}$ is not in the spectrum of $T$.

Note that, in dimension $4$, the set
\[
 \Deltabar=\{(x_1,\dots,x_4)\in\bR^4\,;\,
   0\le x_1\le\cdots\le x_4
   \text{ and }
   x_1+\cdots+x_4=1\}
\]
is a tetrahedron with vertices
\[
 E_1=\Big(\frac{1}{4},\frac{1}{4},\frac{1}{4},\frac{1}{4}\Big),\
 E_2=\Big(0,\frac{1}{3},\frac{1}{3},\frac{1}{3}\Big),\
 E_3=\Big(0,0,\frac{1}{2},\frac{1}{2}\Big)
 \text{ and }
 E_4=(0,0,0,1).
\]
For each $i=1,2,3$, we denote by $\Deltabar_i$ the face of
$\Deltabar$ consisting of the points $(x_1,\dots,x_4)$ in
$\Deltabar$ with $x_i=x_{i+1}$.  The fourth face of $\Deltabar$
is the triangle $\Deltabar_0=E_2E_3E_4$ defined
by $x_1=0$.

Let $\alpha,\beta\in\bR$ with $1<\alpha<\beta$.  We first
observe that there is a unique generalized $4$-system
$\uR=(R_1,\dots,R_4)$ on $[3+\alpha,\alpha(3+\alpha)]$
with $R_1=R_2$, whose division points are
\[
 A_1=(1,1,1,\alpha),\
 A_2=(1,1,\alpha,\alpha),\
 A_3=(1,1,\alpha,\alpha^2),\
 \alpha A_1=(\alpha,\alpha,\alpha,\alpha^2).
\]
Its combined graph is shown on the left in Figure \ref{figRS}.
Moreover, this map extends uniquely to a self-similar generalized
$4$-system on $[3+\alpha,\infty)$ also denoted $\uR$ such that
$\uR(\alpha q)=\alpha\uR(q)$ for each $q\ge 3+\alpha$.

\begin{figure}[h]
   \begin{tikzpicture}[xscale=1,yscale=1]
      %
      \pgfmathparse{2}\let\a\pgfmathresult;
      \pgfmathparse{0.8}\let\r\pgfmathresult;
      \pgfmathparse{\r*\a^2-\r}\let\u\pgfmathresult;
      \pgfmathparse{\u+2*(\r*\a-\r)}\let\v\pgfmathresult;
      \draw[thick] (0,\r*\a) -- (\v,\r*\a) -- (\u,\r) --
         (0,\r) -- (\u,\r*\a^2) -- (\v,\r*\a^2);
       \node[below] at (\u/2,\r) {$R_1=R_2$};
       \node[above] at (\u/2+0.7,\r*\a) {$R_3$};
       \node[left] at (\u/2+0.5,\r*\a+1.1) {$R_4$};
       \node[left] at (0,\r) {$1$};
       \node[left] at (0,\r*\a) {$\alpha$};
       \node[right] at (\v,\r*\a) {$\alpha$};
       \node[right] at (\v,\r*\a^2) {$\alpha^2$};
       \draw[line width=0.05cm] (0,0.05) -- (0,-0.05);
       \node[below] at (0,-0.1) {$3+\alpha$};
       \draw[dashed] (0,0.05) -- (0,\r*\a);
       \draw[line width=0.05cm] (\u,0.05) -- (\u,-0.05);
       \node[below] at (\u-0.4,-0.05){$2+\alpha+\alpha^2$};
       \draw[dashed] (\u,0.05) -- (\u,\r*\a^2);
       \draw[line width=0.05cm] (\v,0.05) -- (\v,-0.05)
         node[below] {$3\alpha+\alpha^2$};
       \draw[dashed] (\v,0.05) -- (\v,\r*\a^2);
      \draw (-0.2,0) -- (\v+0.2,0); 
      %
      \pgfmathparse{2.1}\let\b\pgfmathresult;
      \pgfmathparse{\v+3}\let\s\pgfmathresult;
      \pgfmathparse{\s+2*(\r*\b^2-\r*\b)}\let\u\pgfmathresult;
      \pgfmathparse{\u+(\r*\b^2-\r)}\let\v\pgfmathresult;
      \draw[thick] (\s,\r) -- (\u,\r) -- (\v,\r*\b^2)
         -- (\u,\r*\b^2) -- (\s,\r*\b) -- (\v,\r*\b);
       \node[left] at (\s/2+\u/2,\r*\b+1.2) {$S_3=S_4$};
       \node[above] at (\s/2+\u/2+0.2,\r*\b) {$S_2$};
       \node[above] at (\s/2+\u/2+0.2,\r) {$S_1$};
       \node[left] at (\s,\r) {$1$};
       \node[left] at (\s,\r*\b) {$\beta$};
       \node[right] at (\v,\r*\b) {$\beta$};
       \node[right] at (\v,\r*\b^2) {$\beta^2$};
       \draw[line width=0.05cm] (\s,0.05) -- (\s,-0.05)
         node[below] {$1+3\beta$};
       \draw[dashed] (\s,0.05) -- (\s,\r*\b);
       \draw[line width=0.05cm] (\u,0.05) -- (\u,-0.05)
         node[below] {$1+\beta+2\beta^2\ $};
       \draw[dashed] (\u,0.05) -- (\u,\r*\b^2);
       \draw[line width=0.05cm] (\v,0.05) -- (\v,-0.05)
         node[below] {$\beta+3\beta^2$};
       \draw[dashed] (\v,0.05) -- (\v,\r*\b^2);
      \draw (\s-0.2,0) -- (\v+0.2,0); 
  \end{tikzpicture}
\caption{The graphs of $\uR$ and $\uS$.}
\label{figRS}
\end{figure}

Similarly there is a unique generalized $4$-system
$\uS=(S_1,\dots,S_4)$ on $[1+3\beta,\beta(1+3\beta)]$
with $S_3=S_4$, whose division points are
\[
 B_1=(1,\beta,\beta,\beta),\
 B_2=(1,\beta,\beta^2,\beta^2),\
 B_3=(\beta,\beta,\beta^2,\beta^2),\
 \beta B_1=(\beta,\beta^2,\beta^2,\beta^2).
\]
Its combined graph is shown on the right in Figure \ref{figRS}
and it extends uniquely to a self-similar generalized
$4$-system on $[1+3\beta,\infty)$ such that
$\uS(\beta q)=\beta\uS(q)$ for each $q\ge 1+3\beta$.

For each $i=1,2,3$, let $\oA_i$ (resp.~$\oB_i$) denote the
quotient of $A_i$ (resp.~$B_i$) by the sum $|A_i|$ (resp.~
$|B_i|$) of its
coordinates.  Since $\uR$ is self-similar with $R_1=R_2$,
it follows from Lemma \ref{comput:lemma} (i) that
$\cK(\uR)$ is the triangle $\oA_1\oA_2\oA_3$ contained
in the face $\Deltabar_1=E_1E_3E_4$ of $\Deltabar$.
Similarly since $S_3=S_4$, the convex set $\cK(\uS)$
is the triangle $\oB_1\oB_2\oB_3$ contained in
$\Deltabar_3=E_1E_2E_3$. These two triangles are shown
on the left drawing in Figure \ref{figK}.

\begin{figure}[h]
\centering
   \begin{tikzpicture}[scale=2]
      %
      \def\s{-3}; 
      \def\r{7}; 
      \def\rbis{2.5}; 
      \def\angle{-37.5}; 
      \def\a{2};
      \def\b{4};
      \pgfmathparse{60+\angle-10}\let\anglebis\pgfmathresult; 
      \coordinate (ei) at (\s,0);
      \coordinate (eii) at ($(ei)+\r*({cos(\angle)},{sin(\angle)})$);
      \coordinate (eiii) at ($(ei)+\r*({cos(\angle+60)},{sin(\angle+60)})$);
      \coordinate (eiv) at ($(ei)+\rbis*({cos(\anglebis)},{sin(\anglebis)})$);
      \coordinate (Ei) at ($1/4*(ei)+1/4*(eii)+1/4*(eiii)+1/4*(eiv)$);
      \node[below] at (Ei) {$E_1$};
      \coordinate (Eii) at ($1/3*(eii)+1/3*(eiii)+1/3*(eiv)$);
      \node[below] at (Eii) {$E_2$};
      \coordinate (Eiii) at ($1/2*(eiii)+1/2*(eiv)$);
      \node[above] at (Eiii) {$E_3$};
      \coordinate (Eiv) at (eiv);
      \node[left] at (Eiv) {$E_4$};
      \coordinate (A) at ($(ei)+(eii)+(eiii)+\a*(eiv)$);
      \coordinate (Ai) at ($1/(3+\a)*(A)$);
      \node[left] at ($(Ai)+(0,-0.05)$) {$A_1$};
      \coordinate (A) at ($(ei)+(eii)+\a*(eiii)+\a*(eiv)$);
      \coordinate (Aii) at ($1/(2+2*\a)*(A)$);
      \node[right] at ($(Aii)+(-0.05,-0.1)$) {$A_2$};
      \coordinate (A) at ($(ei)+(eii)+\a*(eiii)+\a^2*(eiv)$);
      \coordinate (Aiii) at ($1/(2+\a+\a^2)*(A)$);
      \node[above] at (Aiii) {$A_3$};
      \coordinate (B) at ($(ei)+\b*(eii)+\b*(eiii)+\b*(eiv)$);
      \coordinate (Bi) at ($1/(1+3*\b)*(B)$);
      \node[below] at (Bi) {$B_1$};
      \coordinate (B) at ($(ei)+\b*(eii)+\b^2*(eiii)+\b^2*(eiv)$);
      \coordinate (Bii) at ($1/(1+\b+2*\b^2)*(B)$);
      \node[above] at ($(Bii)+(-0.05,0)$) {$B_2$};
      \coordinate (B) at ($\b*(ei)+\b*(eii)+\b^2*(eiii)+\b^2*(eiv)$);
      \coordinate (Biii) at ($1/(2*\b+2*\b^2)*(B)$);
      \node[left] at (Biii) {$B_3$};
      \draw[thick] (Ei) -- (Eii) -- (Eiii) -- (Eiv) --(Ei) -- (Eiii);
      \draw[thick] (Ai) -- (Aii) -- (Aiii) -- (Ai);
      \path [thick,fill=gray!30,opacity=0.8] (Ai) -- (Aii) -- (Aiii) -- (Ai);
      \draw[thick] (Bi) -- (Bii) -- (Biii) -- (Bi);
      \path [thick,fill=gray!30,opacity=0.8] (Bi) -- (Bii) -- (Biii) -- (Bi);
      \coordinate (m) at ($1/3*(Ei)+2/3*(Eiv)$);
        \node[left] at ($(m)-(0.2,0.1)$) {$\Deltabar_1$};
        \draw[->] ($(m)-(0.2,0.1)$) to [out=0,in=-90, looseness=1] ($(m)+(0.1,0.1)$);
      \coordinate (m) at ($3/4*(Eii)+1/4*(Eiii)$);
        \node[right] at ($(m)+(0.2,0.1)$) {$\Deltabar_3$};
        \draw[->] ($(m)+(0.2,0.1)$) to [out=180,in=45, looseness=1] ($(m)-(0.1,0)$);
  \end{tikzpicture}
  \hspace*{2cm}
  \begin{tikzpicture}[scale=2]
      %
      \def\s{-3}; 
      \def\r{7}; 
      \def\rbis{2.5}; 
      \def\angle{-37.5}; 
      \def\a{2};
      \def\b{4};
      \pgfmathparse{60+\angle-10}\let\anglebis\pgfmathresult; 
      \coordinate (ei) at (\s,0);
      \coordinate (eii) at ($(ei)+\r*({cos(\angle)},{sin(\angle)})$);
      \coordinate (eiii) at ($(ei)+\r*({cos(\angle+60)},{sin(\angle+60)})$);
      \coordinate (eiv) at ($(ei)+\rbis*({cos(\anglebis)},{sin(\anglebis)})$);
      \coordinate (Ei) at ($1/4*(ei)+1/4*(eii)+1/4*(eiii)+1/4*(eiv)$);
      \coordinate (Eii) at ($1/3*(eii)+1/3*(eiii)+1/3*(eiv)$);
      \coordinate (Eiii) at ($1/2*(eiii)+1/2*(eiv)$);
      \node[above] at (Eiii) {$E_3$};
      \coordinate (Eiv) at (eiv);
      \coordinate (A) at ($(ei)+(eii)+(eiii)+\a*(eiv)$);
      \coordinate (Ai) at ($1/(3+\a)*(A)$);
      \filldraw (Aii) circle (0.5pt);
      \node[left] at ($(Ai)+(0,-0.05)$) {$A_1$};
      \coordinate (A) at ($(ei)+(eii)+\a*(eiii)+\a*(eiv)$);
      \coordinate (Aii) at ($1/(2+2*\a)*(A)$);
      \node[right] at (Aii) {$A_2$};
      \coordinate (A) at ($(ei)+(eii)+\a*(eiii)+\a^2*(eiv)$);
      \coordinate (Aiii) at ($1/(2+\a+\a^2)*(A)$);
      \node[left] at (Aiii) {$A_3$};
      \coordinate (B) at ($(ei)+\b*(eii)+\b*(eiii)+\b*(eiv)$);
      \coordinate (Bi) at ($1/(1+3*\b)*(B)$);
      \node[right] at (Bi) {$B_1$};
      \coordinate (B) at ($(ei)+\b*(eii)+\b^2*(eiii)+\b^2*(eiv)$);
      \coordinate (Bii) at ($1/(1+\b+2*\b^2)*(B)$);
      \filldraw (Bii) circle (0.5pt);;
      \node[right] at (Bii) {$B_2$};
      \coordinate (B) at ($\b*(ei)+\b*(eii)+\b^2*(eiii)+\b^2*(eiv)$);
      \coordinate (Biii) at ($1/(2*\b+2*\b^2)*(B)$);
      \node[right] at ($(Biii)+(-0.05,-0.05)$) {$B_3$};
      \filldraw (Biii) circle (0.5pt);
      \draw[dashed] (Ai) -- (Aii) -- (Eiii);
      \draw[dashed] (Bi) -- (Aii);
      \draw[thick] (Bi) -- (Eiii) -- (Aiii) -- (Bi);
      \draw[semithick] (Bi) -- (Ai) -- (Aiii);
  \end{tikzpicture}
\caption{On the left: the triangles $\cK(R)$ and $\cK(S)$.  On the right:
the convex $\cK$.}
  \label{figK}
\end{figure}
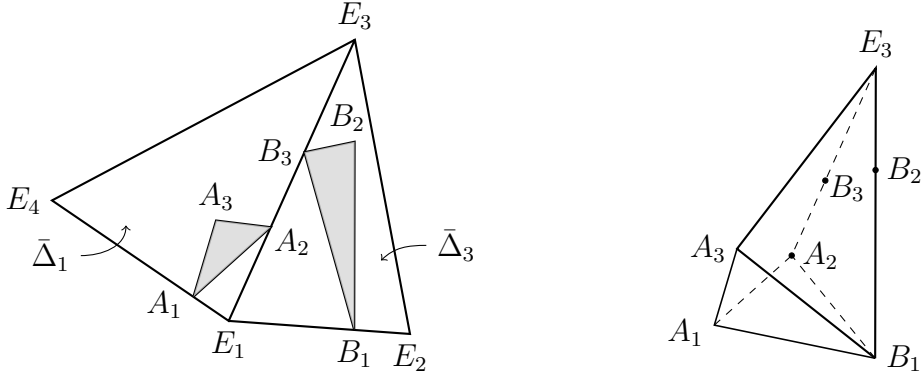

Let $\cK$ denote the convex hull of the set
$S:=\{\oB_1,\oA_1,\oA_2,\oA_3,E_3\}$.  Since $\oB_2$
and $\oB_3$ belong respectively to the line segments
$\oB_1E_3$ and $\oA_2E_3$, the convex $\cK$ contains
both $\cK(\uR)$ and $\cK(\uS)$.  The right drawing in
Figure \ref{figK} shows a picture of $\cK$. Based on the
relative positions of the points of $S$, we see that
$S$ is the set of vertices of $\cK$ and that
the boundary of $\cK$ consists of four triangles
$\oB_1\oA_1\oA_2$, $\oB_1\oA_1\oA_3$, $\oB_1\oA_3E_3$ and
$\oB_1\oA_2E_3\subset\Deltabar_3$, and one quadrilateral
$\oA_1\oA_2E_3\oA_3\subset\Deltabar_1$.

Consider the linear map $T=(T_1,\dots,T_5)\colon\bR^4\to\bR^5$
whose components are given by
\begin{align*}
 T_1(\ux)
   &= -(\alpha-1)\beta x_1 - (\beta-\alpha) x_2 + (\beta-1)x_4,\\
 T_2(\ux)
   &= (\alpha-1) \beta x_1 - (\alpha-1)\beta x_2
      + \alpha(\beta-1)x_3  - (\beta-1)x_4,\\
 T_3(\ux)
   &= \alpha\beta(\alpha-1)x_1 - \alpha(\alpha-1)x_2
      + (\beta-1)x_3 - (\beta-1)x_4,\\
 T_4(\ux) &= x_3-x_4,\\
 T_5(\ux) &= -x_2 + x_4.
\end{align*}
The maps $T_1$, $T_2$ and $T_3$ are chosen so that they
are non-negative on $\cK$ and vanish respectively on the
triangles $\oB_1\oA_1\oA_2$, $\oB_1\oA_1\oA_3$ and
$\oB_1\oA_3E_3$.  As the two other faces of $\cK$ are
contained on the faces $\Deltabar_1$ and $\Deltabar_3$
of $\Deltabar$, we conclude that
\begin{equation}
 \label{min:eq:K}
 \cK=\{\ux\in\Deltabar\,;\, T_i(\ux)\ge 0 \text{ for } i=1,2,3\}.
\end{equation}

We will prove the following result.

\begin{theorem}
\label{min:thm}
Suppose that a proper $4$-system $\uP$ satisfies
$\oB_1\in\cK(\uP)\subseteq\cK$.  Then, we have
$\cK(\uP)\subseteq\Deltabar_3$.
\end{theorem}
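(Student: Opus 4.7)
The plan is to argue by contradiction. Suppose $\uP$ is a proper $4$-system satisfying $\oB_1\in\cK(\uP)\subseteq\cK$ but with $\cK(\uP)\not\subseteq\Deltabar_3$. Then there is an accumulation point $\uy$ of $q^{-1}\uP(q)$ with $y_4-y_3>0$. First, I would extract sequences $s_i,r_i\to\infty$ with $s_i^{-1}\uP(s_i)\to\oB_1$ and $(P_4(r_i)-P_3(r_i))/r_i\ge 2\eta$ for some $\eta>0$.

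The main algebraic input is the identity
\[
 T_3(\ux)=\alpha(\alpha-1)(\beta x_1-x_2)+(\beta-1)(x_3-x_4),
\]
which one checks by direct expansion. Since $T_3\ge 0$ on $\cK$ and $x_4\ge x_3$ on $\Deltabar$, both summands on the right are nonnegative on $\cK$; in particular $\beta x_1-x_2\ge 0$ on $\cK$, and the crucial bound
\[
 x_4-x_3\le \frac{\alpha(\alpha-1)}{\beta-1}\bigl(\beta x_1-x_2\bigr)
\]
holds on $\cK$. Lemma~\ref{comput:lemma}(ii), combined with the Lipschitz continuity of $T_3$ and with the inclusion $\cF(\uP)\subseteq\cK$, transports this bound to the trajectory: for any $\delta>0$ there is $Q_\delta$ such that $P_4(q)-P_3(q)\le\frac{\alpha(\alpha-1)}{\beta-1}\bigl(\beta P_1(q)-P_2(q)\bigr)+\delta q$ for all $q\ge Q_\delta$. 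Evaluating at $q=r_i$ with $\delta$ sufficiently small yields $F(r_i)\ge\eta_0 r_i$ for some $\eta_0>0$, where $F(q):=\beta P_1(q)-P_2(q)$, whereas $F(s_i)/s_i\to 0$ since $\beta y_1-y_2=0$ at $\uy=\oB_1$.

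The remaining step, and the technical heart of the proof, is to use the dynamics of the proper $4$-system to rule out this behaviour. In a proper $4$-system, $F$ is piecewise linear with slope $+\beta$ when the ball is with $P_1$, slope $-1$ when it is with $P_2$, and slope $0$ when it is with $P_3$ or $P_4$. Thus the inequality $F(r_i)-F(s_i)\ge\eta_0 r_i -o(s_i)$ forces the ball to spend a time of order $r_i$ with $P_1$ between times $s_i$ and $r_i$. Simultaneously, opening a gap $P_4-P_3$ of size $\eta r_i$ while starting from the configuration $P_3(s_i)\approx P_4(s_i)\approx\beta s_i/(1+3\beta)$ (a consequence of $s_i^{-1}\uP(s_i)\to\oB_1$) requires the ball to spend an amount of time of order $r_i$ with $P_4$; in a proper $4$-system this is only possible after passing through configurations with $P_3=P_4$, and each such passage is constrained by (S3). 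I would then carry out a careful bookkeeping of the ball transfers between the groups $\{P_1,P_2\}$ and $\{P_3,P_4\}$, using the remaining inequalities $T_1\ge 0$ and $T_2\ge 0$ on $\cK$ to show that these two demands cannot be met simultaneously without the trajectory $q^{-1}\uP(q)$ exiting some fixed neighbourhood of $\cK$, contradicting Lemma~\ref{comput:lemma}(ii). This final step is the main obstacle: it is where the restriction to \emph{proper} $4$-systems (as opposed to generalized ones like $\uS$, where $S_3=S_4$ can be maintained while both move) is essential, and it should require a case analysis on which player holds the ball at each transition between subintervals where $P_3=P_4$ and subintervals where $P_3<P_4$.
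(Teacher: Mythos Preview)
Your setup is sound and overlaps with the paper's: the factorization of $T_3$ gives exactly the paper's key inequality $x_4-x_3\le\frac{\alpha(\alpha-1)}{\beta-1}(\beta x_1-x_2)$ on $\cK$ (though your claim that ``both summands are nonnegative'' is wrong---the second summand $(\beta-1)(x_3-x_4)$ is $\le 0$ on $\Deltabar$; the inequality follows precisely because $T_3\ge 0$ forces the first summand to dominate). Your function $F(q)=\beta P_1(q)-P_2(q)$ is the paper's $q\kappa_1(q)$, and your appeal to Lemma~\ref{comput:lemma}(ii) matches Lemma~\ref{min:lemma:kappa}(ii).

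However, the ``technical heart'' you leave open is essentially the entire proof, and your sketch is missing the two ingredients that make it work. First, you need a companion inequality bounding $\kappa_1$ from above: on $\cK$ one has $\beta x_1-x_2\le\frac{\beta-1}{\alpha-1}(x_4-x_2)$, which is exactly $T_1\ge 0$ (the paper does not use $T_2$ here). Second, the paper does not argue by contradiction via loose time-budgeting; it runs a forward induction. From a point $q$ with $q^{-1}\uP(q)$ close to $\oB_1$, one advances to the next division number $w>q$ with $P_2(w)=P_3(w)$, uses convexity (G3) on the interval where $P_2<P_3$ to pin down the structure of $\uP$ on $[q,w]$ (Figure~\ref{figP}), and combines the two inequalities above to prove a contraction
\[
 \kappa_1(w)\ \lesssim\ \frac{\alpha}{\beta}\,\kappa_1(q)+(\text{small}),
\]
together with $\kappa_3\le 2\alpha\epsilon$ on all of $[q,w]$. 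The factor $\alpha/\beta<1$ is what drives the argument; this is why the hypothesis $\alpha<\beta$ is essential and why the conclusion fails for $\beta<\alpha$, as noted after the theorem. Your bookkeeping sketch does not isolate either the $T_1$-inequality or this contraction, so as written it cannot close.

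Finally, your remark that the restriction to standard (non-generalized) $4$-systems is ``essential'' is a misconception. The system $\uS$ satisfies the theorem trivially since $\cK(\uS)\subseteq\Deltabar_3$; nothing in the argument hinges on excluding generalized systems. What is actually used is the convexity property (G3) on intervals where $P_2<P_3$, a property shared by standard and generalized systems alike.
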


In particular, this implies that there is no $4$-system $\uP$
for which $\cK(\uP)$ is the convex hull of $\cK(\uR)\cup\cK(\uS)$.
This requires that the parameters $\alpha$ and $\beta$ satisfy
our current hypothesis $1<\alpha<\beta$ because, for a choice of
parameters satisfying $1<\beta<\alpha$, the first author proved
(unpublished work) that, on the contrary, such a $4$-system $\uP$
exists and so satisfies $\mu_L(\uP)=\min\{\mu_L(\uR),\mu_L(\uS)\}$
for any linear map $L\colon\bR^4\to\bR^m$.

If we take Theorem \ref{min:thm} for granted, we deduce
that the spectrum of $T$ is not closed under the
minimum.

\begin{corollary}
\label{min:cor}
There exists no proper $4$-system $\uP$ such that
$\mu_T(\uP)=\min\{\mu_T(\uR),\mu_T(\uS)\}$.
\end{corollary}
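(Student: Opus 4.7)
The plan is to compute $\min\{\mu_T(\uR),\mu_T(\uS)\}$ componentwise, then show that any proper $4$-system $\uP$ realizing this infimum as $\mu_T(\uP)$ would satisfy the hypotheses of Theorem \ref{min:thm}, and derive a contradiction from its conclusion.

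For the computation, I use that $T_1,T_2,T_3$ are non-negative on $\cK$ and vanish respectively on the triangles $\oB_1\oA_1\oA_2$, $\oB_1\oA_1\oA_3$, $\oB_1\oA_3E_3$, so they each vanish at all three vertices of $\cK(\uR)=\oA_1\oA_2\oA_3$ and at $\oB_1\in\cK(\uS)=\oB_1\oB_2\oB_3$, giving $\mu_{T_i}(\uR)=\mu_{T_i}(\uS)=0$ for $i=1,2,3$. Since $T_4=x_3-x_4$ vanishes identically on $\Deltabar_3\supseteq\cK(\uS)$, we have $\mu_{T_4}(\uS)=0$, whereas evaluating $T_4$ at the vertices of $\cK(\uR)\subseteq\Deltabar_1$ yields a strictly negative value $c:=\mu_{T_4}(\uR)<0$. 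Finally $T_5=x_4-x_2\ge 0$ on all of $\Deltabar$ and vanishes at $\oB_1$, so $\mu_{T_5}(\uS)=0$. Altogether,
\[
 \min\{\mu_T(\uR),\mu_T(\uS)\}=(0,0,0,c,0)\ \text{with}\ c<0.
\]

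Suppose for contradiction that a proper $4$-system $\uP$ attains this value as $\mu_T(\uP)$. The equalities $\mu_{T_i}(\uP)=0$ for $i=1,2,3$ say that $T_i\ge 0$ on the compact convex set $\cK(\uP)\subseteq\Deltabar$, so by the characterization (\ref{min:eq:K}) of $\cK$ we obtain $\cK(\uP)\subseteq\cK$. Using $T_5\ge 0$ on $\Deltabar$, the equality $\mu_{T_5}(\uP)=0$ together with the compactness of $\cK(\uP)$ yields a point $\ux^*\in\cK(\uP)$ with $x_2^*=x_4^*$; the ordering $x_2^*\le x_3^*\le x_4^*$ in $\Deltabar$ then forces $x_2^*=x_3^*=x_4^*$. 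The key calculation is to restrict $T_1$ and $T_2$ to the segment $\{x_2=x_3=x_4\}\cap\Deltabar$ parametrized by $x_1=s\in[0,1/4]$: both reduce to $\pm(\alpha-1)[1-(1+3\beta)s]/3$, so the joint non-negativity $T_1(\ux^*),T_2(\ux^*)\ge 0$ singles out $s=1/(1+3\beta)$, i.e.\ $\ux^*=\oB_1$.

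Hence $\oB_1\in\cK(\uP)\subseteq\cK$, and Theorem \ref{min:thm} gives $\cK(\uP)\subseteq\Deltabar_3$, on which $T_4$ vanishes identically; this contradicts $\mu_{T_4}(\uP)=c<0$. The main obstacle is the middle step, translating the componentwise infimum conditions into the concrete fact $\oB_1\in\cK(\uP)$; once the algebraic identification of $\{x_2=x_3=x_4\}\cap\cK$ with the single point $\oB_1$ is made, Theorem \ref{min:thm} closes the argument.
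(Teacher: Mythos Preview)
Your argument is correct and follows the paper's approach closely. One small slip: it is not true that each of $T_1,T_2,T_3$ vanishes at \emph{all three} vertices $\oA_1,\oA_2,\oA_3$ (for instance $T_1(\oA_3)>0$); what you actually need, and what holds, is that each $T_i$ is non-negative on $\cK\supseteq\cK(\uR)$ and vanishes at \emph{at least one} of the $\oA_j$, which already gives $\mu_{T_i}(\uR)=0$.

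The only substantive difference from the paper is in how you single out $\oB_1$. The paper observes directly that $T_5$ is strictly positive at every vertex of $\cK$ except $\oB_1$, hence (by linearity and convexity) $\oB_1$ is the unique zero of $T_5$ on $\cK$. You instead first use $T_5=0$ inside $\Deltabar$ to force $x_2=x_3=x_4$, and then use the pair $T_1,T_2\ge 0$ on that line segment to pin down $x_1=1/(1+3\beta)$. Both routes are short and valid; the paper's avoids the extra algebraic check by staying inside $\cK$, while yours has the advantage of not needing to evaluate $T_5$ at all five vertices of $\cK$.
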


\begin{proof}
We find that
\begin{align*}
 \mu_T(\uR)
  &= \min_{1\le i\le 3} T(\oA_i)
  = \big(0,\,0,\,0,\,\alpha(1-\alpha)|A_3|^{-1},\,
         (\alpha-1)|A_2|^{-1}\big),\\
 \mu_T(\uS)
  &= \min_{1\le i\le 3} T(\oB_i)
  = \big(0,\,0,\,0,\,0,\,0\big),
\end{align*}
and thus $\min\{\mu_T(\uR),\mu_T(\uS)\}=(0,0,0,c,0)$ where
$c=\alpha(1-\alpha)|A_3|^{-1}$ is negative.  Suppose on the
contrary that there exists a $4$-system $\uP$ such that
$\mu_T(\uP)=(0,0,0,c,0)$.  Then we have $\inf T_i(\cK(\uP))=0$
for $i=1,2,3$ and so $\cK(\uP)\subseteq \cK$ because of
\eqref{min:eq:K}.  We also have $\inf T_5(\cK(\uP))=0$.
However, $T_5$ is strictly positive at each vertex of $\cK$
(the points of $S$) except at $\oB_1$ where it vanishes.
Thus $\oB_1$ is the only point of $\cK$ where $T_5$ vanishes
and so $\cK(\uP)$ must contain $\oB_1$.  Finally, we have
$\inf T_4(\cK(\uP))=c<0$ and so $\cK(\uP)\nsubseteq\Deltabar_3$
because $T_4$ vanishes everywhere on $\Deltabar_3$. This
contradicts Theorem \ref{min:thm}.
\end{proof}

Clearly, the corollary requires that there is no $4$-system $\uP$
for which $\cK(\uP)$ is the convex hull $\tcK$ of $\cK(\uR)\cup\cK(\uS)$.
Conversely, if we only assume this fact, then we can construct a
linear map $T\colon\bR^4\to\bR^{10}$ for which $\min\{\mu_T(\uR),\mu_T(\uS)\}$
is not in the spectrum of $T$.  It suffices to choose $T_1,\dots,T_4$
so that the conditions $T_i\ge 0$ ($1\le i\le 4$) define $\tcK$
within $\Deltabar$ and to choose the remaining six components
$T_5,\dots,T_{10}$ so that they vanish at one of the six vertices
$\oA_1,\oA_2,\oA_3,\oB_1,\oB_2,\oB_3$ of $\tcK$ and are strictly
positive at the other vertices.  The construction that we propose
here is more economical as it uses a linear map $T$ with only five
components.

\medskip
We now turn to the proof of Theorem \ref{min:thm}. From now on we
fix $q_0>0$ and a proper $4$-system
$\uP=(P_1,\dots,P_4)\colon[q_0,\infty)\to\bR^4$
satisfying $\oB_1\in\cK(\uP)\subset\cK$, as in the statement of the
theorem. For each $q\ge q_0$, we set
\begin{equation}
 \label{min:eq:kappa}
 \kappa_1(q)=\frac{\beta P_1(q) - P_2(q)}{q},
 \quad
 \kappa_2(q)=\frac{P_4(q) - P_2(q)}{q},
 \quad
 \kappa_3(q)=\frac{P_4(q) - P_3(q)}{q}.
\end{equation}
We note that
\begin{equation}
 \label{min:eq:equiv}
 \cK(\uP)\subseteq\Deltabar_3
 \iff
 \cF(\uP)\subseteq\Deltabar_3
 \iff
 \lim_{q\to\infty}\kappa_3(q)=0.
\end{equation}
So we are left to showing that $\kappa_3$ vanishes at infinity.

We first rewrite the formula $q=P_1(q)+\cdots+P_4(q)$ ($q\ge q_0$)
as follows.

\begin{lemma}
\label{min:lemma:q}
We have
$q=(1+3\beta)P_1(q)+(2\kappa_2(q)-\kappa_3(q)-3\kappa_1(q))q$
for each $q\ge q_0$.
\end{lemma}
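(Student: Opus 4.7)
The identity is purely algebraic: it follows by expanding the combination of $\kappa_i(q)$'s using their definitions in \eqref{min:eq:kappa} and then invoking the normalization $P_1(q)+P_2(q)+P_3(q)+P_4(q)=q$ from property (G1) of Definition \ref{pgn:def:templates}. So the plan is simply to multiply through by $q$ and match terms.

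Concretely, from \eqref{min:eq:kappa} we have the clearing-of-denominators identities
\[
 \kappa_1(q)q=\beta P_1(q)-P_2(q),\quad
 \kappa_2(q)q=P_4(q)-P_2(q),\quad
 \kappa_3(q)q=P_4(q)-P_3(q).
\]
Plugging these into $\big(2\kappa_2(q)-\kappa_3(q)-3\kappa_1(q)\big)q$, the $P_4$ contributions combine as $2P_4-P_4=P_4$, the $P_2$ contributions combine as $-2P_2+3P_2=P_2$, the $P_3$ contribution is simply $+P_3$, and the $P_1$ contribution is $-3\beta P_1$. Thus
\[
 \big(2\kappa_2(q)-\kappa_3(q)-3\kappa_1(q)\big)q
   = P_2(q)+P_3(q)+P_4(q)-3\beta P_1(q).
\]
Adding $(1+3\beta)P_1(q)$ to both sides cancels the $-3\beta P_1(q)$ and yields $P_1(q)+P_2(q)+P_3(q)+P_4(q)$, which equals $q$ by (G1). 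This proves the stated equality.

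There is no genuine obstacle here: the lemma is a direct bookkeeping identity whose only role is to package the trace relation $\sum_i P_i(q)=q$ in a form involving the three quantities $\kappa_1,\kappa_2,\kappa_3$ that will be used in subsequent arguments. The single modest care point is to track the signs correctly when collecting the $P_2$ terms (the $-2\kappa_2$ gives $-2P_2$ while the $-3\kappa_1$ gives $+3P_2$), after which the cancellation with $3\beta P_1$ via the outer $(1+3\beta)P_1(q)$ term is forced.
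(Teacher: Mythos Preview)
Your proof is correct and matches the paper's approach: the paper presents this lemma simply as a rewriting of $q=P_1(q)+\cdots+P_4(q)$ and gives no further argument, since the identity is immediate once one expands $2\kappa_2-\kappa_3-3\kappa_1$ from the definitions in \eqref{min:eq:kappa}. (A tiny slip in your final commentary paragraph: the term is $+2\kappa_2$, not $-2\kappa_2$, though your conclusion that it contributes $-2P_2$ is right.)
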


For each $\delta>0$, we choose $Q_\delta > q_0$ as in
Lemma \ref{comput:lemma} (ii) so that
$q^{-1}\uP(q)\in\cF(\uP)+[-\delta,\delta]^4$ for each $q\ge Q_\delta$.
We will need the following estimates.

\begin{lemma}
\label{min:lemma:kappa}
There exists a constant $c>0$ with the following property.
For each $\delta>0$ and each $q\ge Q_\delta$, we have
\begin{itemize}
 \item[(i)]
  $\disp -c\delta \le \kappa_1(q)
    \le \frac{\beta-1}{\alpha-1}\kappa_2(q)+c\delta$,
 \item[(ii)]
  $\disp \kappa_3(q)
    \le \frac{\alpha^2-\alpha}{\beta-1}\kappa_1(q)+c\delta$.
\end{itemize}
\end{lemma}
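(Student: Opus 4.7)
The plan is to realize $\kappa_1(q)$, $\kappa_2(q)$, and $\kappa_3(q)$ as the values at $q^{-1}\uP(q)$ of three fixed linear forms on $\bR^4$, verify the inequalities of the lemma on the polytope $\cK$ algebraically, and then transfer the estimates to $q^{-1}\uP(q)$ via Lemma \ref{comput:lemma}(ii) combined with the Lipschitz continuity of linear forms.

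First, I would extend the notation of \eqref{min:eq:kappa} to linear forms on $\bR^4$ by setting
\[
 \kappa_1(\ux)=\beta x_1 - x_2,\quad
 \kappa_2(\ux)=x_4 - x_2,\quad
 \kappa_3(\ux)=x_4 - x_3,
\]
so that $\kappa_i(q)=\kappa_i(q^{-1}\uP(q))$ for every $q\ge q_0$. The key algebraic observation, which a direct expansion from the definitions of $T_1$ and $T_3$ confirms, is that
\[
 (\alpha-1)\kappa_1 - (\beta-1)\kappa_2 = -T_1
 \et
 (\beta-1)\kappa_3 - \alpha(\alpha-1)\kappa_1 = -T_3
\]
identically on $\bR^4$. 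Since $T_1,T_3\ge 0$ on $\cK$ by the characterization \eqref{min:eq:K}, and since $\alpha>1$ and $\beta>1$, these identities immediately yield
\[
 \kappa_1(\ux) \le \frac{\beta-1}{\alpha-1}\kappa_2(\ux)
 \et
 \kappa_3(\ux) \le \frac{\alpha^2-\alpha}{\beta-1}\kappa_1(\ux)
\]
for every $\ux\in\cK$. For the remaining bound $\kappa_1(\ux)\ge 0$ on $\cK$, I would evaluate $\kappa_1$ at the five vertices of $\cK$: it vanishes at $\oB_1$ and $E_3$, and it equals $(\beta-1)/|A_i|>0$ at each $\oA_i$ for $i=1,2,3$. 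By linearity and convexity this extends to all of $\cK$.

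Finally, I would transfer these inequalities to $q^{-1}\uP(q)$ using the hypothesis of Theorem \ref{min:thm} together with Lemma \ref{comput:lemma}(ii). Given $\delta>0$ and $q\ge Q_\delta$, there exists $\uy\in\cF(\uP)$ with $\norm{q^{-1}\uP(q)-\uy}\le\delta$, and by assumption $\uy\in\cF(\uP)\subseteq\cK(\uP)\subseteq\cK$. The three inequalities above therefore hold at $\uy$, while each linear form $\kappa_i$ is Lipschitz on $\bR^4$ with a constant depending only on $\beta$, so $|\kappa_i(q)-\kappa_i(\uy)|\le c_0\delta$ for some $c_0=c_0(\beta)$. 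Combining these bounds gives the lemma with a constant $c>0$ depending only on $\alpha$ and $\beta$.

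The main obstacle is simply the bookkeeping required to verify the two algebraic identities linking the $\kappa_i$ to $T_1$ and $T_3$ and checking the vertex values for the lower bound on $\kappa_1$; everything else is a routine application of facts already in place, and the constants can be tracked explicitly if desired.
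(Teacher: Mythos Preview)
Your proof is correct and follows essentially the same approach as the paper: realize the $\kappa_i$ as linear forms, verify the required inequalities on the polytope $\cK$, and transfer to $q^{-1}\uP(q)$ via Lemma~\ref{comput:lemma}(ii) and Lipschitz continuity. The only minor difference is that the paper verifies the two upper bounds by direct evaluation at the five vertices $B_1,A_1,A_2,A_3,E_3$, whereas you observe the cleaner identities $(\alpha-1)\kappa_1-(\beta-1)\kappa_2=-T_1$ and $(\beta-1)\kappa_3-\alpha(\alpha-1)\kappa_1=-T_3$, which immediately reduce those bounds to the defining conditions $T_1,T_3\ge 0$ of $\cK$ in~\eqref{min:eq:K}.
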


\begin{proof}
Consider the linear forms $f_1,f_2,f_3$ on $\bR^4$ given by
\begin{equation}
\label{min:lemma:kappa:eq1}
 f_1(\ux)=\beta x_1-x_2,
 \quad
 f_2(\ux)=x_4-x_2
 \et
 f_3(\ux)=x_4-x_3.
\end{equation}
A quick computation shows that
\[
 0\le f_1(\ux)\le\frac{\beta-1}{\alpha-1}f_2(\ux)
 \et
 f_3(\ux)\le\frac{\alpha^2-\alpha}{\beta-1}f_1(\ux)
\]
for each $\ux\in\{B_1, A_1, A_2, A_3, E_3\}$.  Since $f_1,f_2,f_3$ are
linear forms, these inequalities extend to the set of vertices
$\{\oB_1,\oA_1,\oA_2,\oA_3,E_3\}$ of $\cK$, and thus to their convex
hull $\cK$.  We deduce that, for each $\delta>0$ and each point $\ux$
in $\cK+[-\delta,\delta]^4$, we have
\[
 -c\delta\le f_1(\ux)\le\frac{\beta-1}{\alpha-1}f_2(\ux)+c\delta
 \et
 f_3(\ux)\le\frac{\alpha^2-\alpha}{\beta-1}f_1(\ux)+c\delta,
\]
for a constant $c>0$ that depends only on $\alpha$ and $\beta$.  In
particular, the latter inequalities hold at $\ux=q^{-1}\uP(q)$ for
each $q\ge Q_\delta$, and this yields (i) and (ii).
\end{proof}

\begin{lemma}
\label{min:lemma:intervals}
Let $q,r\in\bR$ with $r>q\ge q_0$.
\begin{itemize}
 \item[(i)] If $P_1$ is constant on $[q,r]$, we have $\kappa_1(t)
    \le (q/t)\kappa_1(q)$ for each $t\in[q,r]$.
 \item[(ii)] If $P_4$ is constant on $[q,r]$, we have $\kappa_3(t)
    \le (q/t)\kappa_3(q)$ for each $t\in[q,r]$.
 \item[(iii)] We have $0\le \kappa_3(t)\le \kappa_2(t)$ for each $t\ge q_0$.
\end{itemize}
\end{lemma}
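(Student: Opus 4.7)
The plan is to prove each of the three items directly from the defining properties of a $4$-system (Definition \ref{pgn:def:n-system}), using monotonicity of the components $P_j$. The key preliminary observation I would establish is that, by property (S2), the one-sided derivatives of each $P_j\colon[q_0,\infty)\to\bR$ take only the values $0$ and $1$; in particular, each $P_j$ is monotone non-decreasing and $1$-Lipschitz. This single fact will drive all three parts.

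For (iii), I would simply invoke property (S1): $0\le P_2(t)\le P_3(t)\le P_4(t)$ yields $0\le P_4(t)-P_3(t)\le P_4(t)-P_2(t)$, and dividing through by $t>0$ gives $0\le\kappa_3(t)\le\kappa_2(t)$. Nothing more is needed here.

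For (i), the natural move is to rewrite $t\kappa_1(t)=\beta P_1(t)-P_2(t)$ and use the hypothesis $P_1(t)=P_1(q)$ for $t\in[q,r]$. Then $t\kappa_1(t)=\beta P_1(q)-P_2(t)$, and monotonicity of $P_2$ makes the right-hand side non-increasing on $[q,r]$. In particular $t\kappa_1(t)\le q\kappa_1(q)$ for every $t\in[q,r]$, which is the asserted inequality. Part (ii) is handled identically: under $P_4(t)=P_4(q)$, the function $t\kappa_3(t)=P_4(q)-P_3(t)$ is non-increasing in $t$ by monotonicity of $P_3$, so $t\kappa_3(t)\le q\kappa_3(q)$.

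There is really no obstacle; the only care one must take is that $P_2$ (for (i)) and $P_3$ (for (ii)) are the components being implicitly bounded below by their values at $q$, so it is essential that the hypothesis fixes $P_1$ and $P_4$ respectively while leaving the adjacent component free to grow. If I wanted to be fully pedantic I would derive the monotonicity of each $P_j$ once at the top of the proof, citing (S2), and then state (iii), (i), (ii) as three short one-line computations. The whole argument should fit in a short paragraph.
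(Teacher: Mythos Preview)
Your proposal is correct and matches the paper's own proof almost exactly: the paper writes out the one-line chain $t\kappa_1(t)=\beta P_1(t)-P_2(t)=\beta P_1(q)-P_2(t)\le\beta P_1(q)-P_2(q)=q\kappa_1(q)$ for (i), declares (ii) similar, and calls (iii) clear. Your only addition is making explicit that the monotonicity of the $P_j$ comes from (S2), which is a harmless elaboration.
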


\begin{proof}
If $P_1$ is constant on $[q,r]$ and if $t\in[q,r]$, we find
\[
 t\kappa_1(t) = \beta P_1(t)-P_2(t) = \beta P_1(q)-P_2(t)
 \le \beta P_1(q)-P_2(q) = q\kappa_1(q),
\]
which proves (i).  The proof of (ii) is similar, and (iii) is clear.
\end{proof}

We conclude with the following lemma which, in view of \eqref{min:eq:equiv},
implies that $\cK(\uP)\subseteq\Deltabar_3$ and thus proves the theorem.

\begin{lemma}
\label{min:lemma:kappa3}
We have $\lim_{q\to\infty} \kappa_3(q)=0$.
\end{lemma}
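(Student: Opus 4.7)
My plan is to argue by contradiction: suppose $\limsup_{q\to\infty}\kappa_3(q)=\eta>0$ and pick $q_k\to\infty$ with $\kappa_3(q_k)\ge\eta$. Because $\oB_1$ is a vertex of the polytope $\cK$ and $\oB_1\in\cK(\uP)=\operatorname{conv}(\cF(\uP))\subseteq\cK$, the Krein--Milman theorem (applied to the extreme point $\oB_1$) yields $\oB_1\in\cF(\uP)$, so there is $s_k\to\infty$ with $s_k^{-1}\uP(s_k)\to\oB_1$ and in particular $\kappa_j(s_k)\to 0$ for $j=1,2,3$. Choosing $\delta$ small enough that $c\delta<\eta(\beta-1)/(2(\alpha^2-\alpha))$, Lemma~\ref{min:lemma:kappa}(ii) also converts $\kappa_3(q_k)\ge\eta$ into $\kappa_1(q_k)\ge c_1\eta$ for some constant $c_1=c_1(\alpha,\beta)>0$ once $q_k\ge Q_\delta$.

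Next I would exploit that $\uP$ is a proper $4$-system: at each non-division $q$, exactly one player is active with slope $1$. Consequently the quantities $q\kappa_1(q)=\beta P_1(q)-P_2(q)$ and $q\kappa_3(q)=P_4(q)-P_3(q)$ are piecewise linear, increasing only during $P_1$- and $P_4$-active periods (rates $\beta$ and $1$) and decreasing only during $P_2$- and $P_3$-active periods (rate $1$). Writing $\mu_i^{(k)}$ for the total time $P_i$ is active on $[s_k,q_k]$, integration gives
\[
 \beta\mu_1^{(k)}-\mu_2^{(k)}=q_k\kappa_1(q_k)-s_k\kappa_1(s_k), \quad
 \mu_4^{(k)}-\mu_3^{(k)}=q_k\kappa_3(q_k)-s_k\kappa_3(s_k),
\]
together with $\mu_1^{(k)}+\mu_2^{(k)}+\mu_3^{(k)}+\mu_4^{(k)}=q_k-s_k$. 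Since the right-hand sides are of order $\eta q_k$ while the $s_k\kappa_j(s_k)$ terms are $o(q_k)$, both $\mu_1^{(k)}$ and $\mu_4^{(k)}$ must be of size at least of order $\eta q_k$.

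To extract the contradiction, I would feed these integrated bounds into the pointwise geometric constraint $\cF(\uP)\subseteq\cK$ at the endpoints of every $P_4$-active subinterval. Since $E_4\notin\cK$, the fourth coordinate on $\cK$ is bounded by some $M<1$, so at the end $u$ of any $P_4$-active subinterval $[r,u]\subseteq[s_k,q_k]$ one has $P_4(u)/u\le M+O(\delta)$, which constrains each $u-r$. Simultaneously, Lemma~\ref{min:lemma:q} applied at $q=q_k$ and at $q=s_k$ relates $\mu_1^{(k)}=P_1(q_k)-P_1(s_k)$ to $q_k$, $s_k$, and the $\kappa_j(q_k)$, further bounding $\mu_1^{(k)}$. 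The hard part, and the main obstacle I foresee, is that a crude application of these pointwise bounds only yields $\eta\le M$, not $\eta=0$. The genuine contradiction must interlace the $P_4$-active subintervals with the forced returns near $\oB_1$ (where $P_4\approx P_3$ and $P_4/q\approx\beta/(1+3\beta)$, the minimum of the fourth coordinate on $\cK$), using that Lemma~\ref{min:lemma:intervals}(ii) collapses $q\kappa_3$ during $P_4$-inactive phases. Threading together the ball-game rules of the $4$-system with the $\cK$-confinement at every intermediate instant is what will let $\delta\to 0$ and $s_k\to\infty$ squeeze $\eta$ to zero.
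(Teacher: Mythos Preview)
Your proposal is not a complete proof, and you yourself identify the gap: the integrated identities over $[s_k,q_k]$ only give crude bounds and you do not see how to force $\eta=0$. The difficulty is real, and the global integration approach is not how the paper resolves it.

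The paper's argument is local and inductive. Fix a small $\epsilon>0$ and then $\delta>0$ so small that, roughly, $(\alpha/\beta)\epsilon$ plus error terms of order $c\delta$ is still at most $\epsilon$. Since $\oB_1\in\cF(\uP)$ (your Krein--Milman step is correct and is used implicitly in the paper), there is $q\ge Q_\delta$ with $\kappa_1(q)\le\epsilon$ and $\kappa_2(q)\le 2\alpha\epsilon$. Let $w>q$ be the \emph{next division number with $P_2(w)=P_3(w)$}. On $(q,w)$ one has $P_2<P_3$, so by property~(G3) the sum $P_1+P_2$ is convex on $[q,w]$ with slopes $0$ then $1$; this pins down the combinatorics of the graph of $\uP$ over $[q,w]$ (Figure~\ref{figP}). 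In the nontrivial case there is a last $u\in(q,w]$ with $P_1(u)=P_2(u)$, whence
\[
 P_1(w)=P_2(u)\ge P_2(q)=\beta P_1(q)-q\kappa_1(q)\ge\beta P_1(q)-\epsilon q.
\]
Through Lemma~\ref{min:lemma:q} this converts into $q/w\le(1+6\alpha\epsilon)/\bigl(\beta(1-11\alpha\epsilon)\bigr)$, and combined with Lemma~\ref{min:lemma:kappa} one gets
\[
 \kappa_1(w)\le\alpha\,\frac{q}{w}\,\kappa_1(q)+\Big(\frac{\beta-1}{\alpha-1}+1\Big)c\delta
           \le\frac{\alpha}{\beta}\cdot\frac{1+6\alpha\epsilon}{1-11\alpha\epsilon}\,\epsilon
             +\Big(\frac{\beta-1}{\alpha-1}+1\Big)c\delta\le\epsilon.
\]
The factor $\alpha/\beta<1$ (this is exactly where the hypothesis $\alpha<\beta$ enters) is what makes the induction close: the bounds $\kappa_1\le\epsilon$, $\kappa_2\le 2\alpha\epsilon$ persist at $w$, and meanwhile $\kappa_3\le 2\alpha\epsilon$ on all of $[q,w]$ via Lemmas~\ref{min:lemma:intervals} and~\ref{min:lemma:kappa}. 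Iterating over successive returns to $\{P_2=P_3\}$ gives $\kappa_3\le 2\alpha\epsilon$ for all large $t$, and $\epsilon$ was arbitrary.

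Your integrated identities record the net drift of $q\kappa_1$ and $q\kappa_3$ over $[s_k,q_k]$, but they cannot detect this contraction: what matters is the pointwise inequality $P_1(w)\ge\beta P_1(q)-\epsilon q$ at each return to $\{P_2=P_3\}$, which comes from the specific ordering of moves forced by (G3) on a single interval $[q,w]$. The correct ``intermediate instants'' are these division numbers, and the correct quantity to propagate is $\kappa_1$ (from which $\kappa_2,\kappa_3$ are then controlled via Lemma~\ref{min:lemma:kappa}), not the aggregate activity times $\mu_i^{(k)}$.
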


\begin{proof}
Choose $\epsilon>0$ small enough so that
\begin{equation}
\label{min:lemma:kappa3:eq1}
 1-11\alpha\epsilon>0
 \et
 \frac{\alpha}{\beta}
 \Big(\frac{1+6\alpha\epsilon}{1-11\alpha\epsilon}\Big)<1.
\end{equation}
This is possible since $1<\alpha<\beta$.  Then choose $\delta>0$
such that
\begin{equation}
\label{min:lemma:kappa3:eq2}
 \frac{\alpha}{\beta}
 \Big(\frac{1+6\alpha\epsilon}{1-11\alpha\epsilon}\Big)
 \epsilon
 +
 \Big( \frac{\beta-1}{\alpha-1}+1 \Big)c\delta \le \epsilon,
\end{equation}
and note that $c\delta<\epsilon$.
Since $\oB_1\in\cF(\uP)$ and since each of the linear forms
$f_1$, $f_2$, $f_3$ given by \eqref{min:lemma:kappa:eq1} vanish
at $\oB_1$, there exists $q\ge Q_\delta$ such that
\begin{equation}
\label{min:lemma:kappa3:eq3}
 \kappa_1(q)\le \epsilon
 \et
 \kappa_3(q)\le \kappa_2(q)\le 2\alpha\epsilon.
\end{equation}
For such $q$, let $w$ be the smallest division point of $\uP$
with $w>q$ for which $P_2(w)=P_3(w)$.  We claim that
\begin{equation}
\label{min:lemma:kappa3:eq4}
 \kappa_1(w)\le \epsilon,
 \quad
 \kappa_2(w)\le 2\alpha\epsilon
 \et
 \max_{q\le t\le w}\kappa_3(t)\le 2\alpha\epsilon.
\end{equation}
If we take this for granted, then \eqref{min:lemma:kappa3:eq3}
holds with $q$ replaced by $w$ and, since the division points of
$\uP$ form an infinite discrete sequence in $[q_0,\infty)$, we
conclude, by induction, that $\kappa_3(t)\le 2\alpha\epsilon$
for each $t\ge q_1$ where $q_1$ is the smallest solution of
\eqref{min:lemma:kappa3:eq3} with $q_1\ge Q_\delta$.  The lemma
thus follows from this claim.

\begin{figure}[h]
\centering
\begin{subfigure}[b]{.3\textwidth}
  \centering
   \begin{tikzpicture}[scale=1]
      \draw[semithick] (-0.2,0) -- (4.5,0);
      \draw[line width=0.05cm] (0,0.05)--(0,-0.05) node[below]{$q$};
      \draw[line width=0.05cm] (2,0.05)--(2,-0.05) node[below]{$r$};
      \draw[line width=0.05cm] (4,0.05)--(4,-0.05) node[below]{$w$};
      \draw[thick] (0,0.5) -- (4,0.5);
      \draw[thick] (0,1.3) -- (2,1.3) -- (4,3.3) -- (2,3.3);
      \draw[thick] (2,4) -- (4,4);
      \draw[dashed] (0,0) -- (0,1.3);
      \draw[dashed] (2,0) -- (2,4);
      \draw[dashed] (4,0) -- (4,4);
      \node[above] at (1,0.5) {$P_1$};
      \node[above] at (1,1.3) {$P_2$};
      \node[above] at (2.95,3.3) {$P_3$};
      \node[above] at (2.95,4) {$P_4$};
      \node at (1.65,3.3) {$\cdots$};
      \node at (1.65,4) {$\cdots$};
  \end{tikzpicture}
  \caption{Case where $P_1(q)=P_1(w)$}
  \label{figPa}
\end{subfigure}%
\begin{subfigure}[b]{.6\textwidth}
   \centering
   \begin{tikzpicture}[scale=1]
      \draw[semithick] (-0.2,0) -- (7.5,0);
      \draw[line width=0.05cm] (0,0.05)--(0,-0.05) node[below]{$q$};
      \draw[line width=0.05cm] (2,0.05)--(2,-0.05) node[below]{$r$};
      \draw[line width=0.05cm] (2.5,0.05)--(2.5,-0.05) node[below]{$s$};
      \draw[line width=0.05cm] (5.6,0.05)--(5.6,-0.05) node[below]{$u$};
      \draw[line width=0.05cm] (7,0.05)--(7,-0.05) node[below]{$w$};
      \draw[thick] (0,0.5) -- (2.5,0.5) -- (4,2) -- (4.2,2) -- (4.4,2.2)
         -- (4.2,2.2) -- (4,2) -- (2.5,2) -- (2,1.5) -- (0,1.5);
      \filldraw (4.54,2.27) circle (0.3pt);
      \filldraw (4.7,2.35) circle (0.3pt);
      \filldraw (4.86,2.43) circle (0.3pt);
      \draw[thick] (7,2.8) -- (5.3,2.8) -- (5,2.5) -- (5.3,2.5)
         -- (7,4.2) -- (2,4.2);
      \draw[thick] (2,5) -- (7,5);
      \draw[dashed] (0,0) -- (0,1.5);
      \draw[dashed] (2,0) -- (2,5);
      \draw[dashed] (2.5,0) -- (2.5,5);
      \draw[dashed] (5.6,0) -- (5.6,5);
      \draw[dashed] (7,0) -- (7,5);
      \node[above] at (1,0.5) {$P_1$};
      \node[above] at (1,1.5) {$P_2$};
      \node[below] at (3.7,4.2) {$P_3$};
      \node[below] at (3.7,5) {$P_4$};
      \node[below] at (6.3,2.8) {$P_1$};
      \node[above] at (5.9,3.3) {$P_2$};
      \node at (1.65,5) {$\cdots$};
      \node at (1.65,4.2) {$\cdots$};
   \end{tikzpicture}
\caption{Case where $P_1(q)<P_1(w)$}
\label{figPb}
\end{subfigure}
\caption{The graph of $\uP$ over $[q,w]$}
\label{figP}
\end{figure}

To prove \eqref{min:lemma:kappa3:eq4}, we first note that the
combined graph of $\uP$ over $[q,w]$ is as in Figure
\ref{figP}.
Indeed, by the choice of $w$, we have $P_2<P_3$ on $(q,w)$,
so $P_1+P_2$ is convex on $[q,w]$: there exists $r\in[q,w]$
such that $P_1+P_2$ is constant on $[q,r]$, and has slope
$1$ on $[r,w]$.  Then $P_1$ and $P_2$ are constant on
$[q,r]$ while $P_3$ and $P_4$ are constant on $[r,w]$. So,
we must have $r<w$.  Let $s$ be the largest element of
$[r,w]$ such that $P_1$ is constant on $[q,s]$.  If $s=w$,
the combined graph of $\uP$ is as in Figure \ref{figPa}.  Otherwise,
it is as in Figure \ref{figPb} where $u$ denotes the largest element
of $[s,w]$ at which $P_1(u)=P_2(u)$.

Since $P_1(q)=P_1(s)$, Lemma \ref{min:lemma:intervals} (i) gives
\begin{equation}
\label{min:lemma:kappa3:eq5}
 \kappa_1(t) \le \frac{q}{t}\kappa_1(q) \le \epsilon
 \quad
 (q\le t\le s).
\end{equation}
By Lemma \ref{min:lemma:kappa} (ii), this in turn implies that
\begin{equation}
\label{min:lemma:kappa3:eq6}
 \kappa_3(t)
 \le \frac{\alpha^2-\alpha}{\beta-1}\kappa_1(t) + c\delta
  < \alpha\epsilon + c\delta
 \le 2\alpha\epsilon
 \quad
 (q\le t\le s),
\end{equation}
since $c\delta\le \epsilon \le \alpha\epsilon$.  If $s=w$,
this proves \eqref{min:lemma:kappa3:eq4} because then $\kappa_2(w)
=\kappa_3(w)=\kappa_3(s)\le 2\alpha\epsilon$.

Suppose from now on that $s<w$.  Since $P_3$ and $P_4$ are constant on
$[r,w]$, we have
\begin{equation}
\label{min:lemma:kappa3:eq7}
 \kappa_3(t)
 = \frac{r}{t}\kappa_3(r)
 \le \kappa_3(r)
 \quad
 (r\le t\le w),
\end{equation}
and so \eqref{min:lemma:kappa3:eq6} yields
$\kappa_3(t)\le 2\alpha\epsilon$ for each $t\in[q,w]$.
In particular, we obtain $\kappa_2(w)=\kappa_3(w)\le
2\alpha\epsilon$ because $P_2(w)=P_3(w)$.  So, it remains only
to prove that $\kappa_1(w)\le \epsilon$.

Applying Lemma \ref{min:lemma:kappa} (i) at the point $w$, and
using $\kappa_2(w)=\kappa_3(w) =
(r/w)\kappa_3(r)$ from above, we obtain
\begin{equation*}
 \kappa_1(w)
 \le \frac{\beta-1}{\alpha-1}\kappa_2(w) + c\delta
  = \frac{\beta-1}{\alpha-1}\cdot\frac{r}{w}\kappa_3(r) + c\delta.
\end{equation*}
Using the first parts of \eqref{min:lemma:kappa3:eq5}
and \eqref{min:lemma:kappa3:eq6} with $t=r$, we also find that
\begin{equation*}
 \kappa_3(r)
 \le \frac{\alpha^2-\alpha}{\beta-1}\kappa_1(r) + c\delta
 \le \frac{\alpha^2-\alpha}{\beta-1}\cdot\frac{q}{r}\kappa_1(q)
     + c\delta.
\end{equation*}
Combining this inequality with the preceding one, we obtain
\begin{equation}
\label{min:lemma:kappa3:eq8}
 \kappa_1(w)
 \le \alpha\frac{q}{w}\kappa_1(q)
     + \Big(\frac{\beta-1}{\alpha-1}+1\Big)c\delta.
\end{equation}
To estimate the ratio $q/w$ from above, we use
Lemma \ref{min:lemma:q} at the
points $q$ and $w$ together with the relations
\begin{equation}
\label{min:lemma:kappa3:eq9}
 P_1(w) = P_2(u) \ge P_2(q) = \beta P_1(q)-q\kappa_1(q)
 \ge \beta P_1(q)-\epsilon q
\end{equation}
coming from the behavior of $\uP$ on $[q,w]$ illustrated in
Figure \ref{figPb}, the definition of $\kappa_1$, and the
hypothesis \eqref{min:lemma:kappa3:eq3}.  Since
$\kappa_2(q)\ge\kappa_3(q)\ge 0$ and
$\kappa_2(w)=\kappa_3(w)\ge 0$, Lemma \ref{min:lemma:q} gives
\[
 q \le (1+3\beta)P_1(q)+(2\kappa_2(q)-3\kappa_1(q))q
 \et
 w \ge (1+3\beta)P_1(w)-3\kappa_1(w)w.
\]
By Lemma \ref{min:lemma:kappa} (i), we have
$\kappa_1(q)\ge -c\delta\ge-\epsilon$, thus
$|\kappa_1(q)|\le \epsilon$ and $2\kappa_2(q)-3\kappa_1(q)
\le 7\alpha\epsilon$ using \eqref{min:lemma:kappa3:eq3}.
By \eqref{min:lemma:kappa3:eq8}, \eqref{min:lemma:kappa3:eq2}
and \eqref{min:lemma:kappa3:eq3},
we also have $\kappa_1(w)\le \alpha\kappa_1(q)+\epsilon
\le 2\alpha\epsilon$.  Together with \eqref{min:lemma:kappa3:eq9},
this gives
\begin{align*}
 (1+6\alpha\epsilon)w
   \ge (1+3\beta)P_1(w)
   &\ge \beta(1+3\beta)P_1(q)-4\beta\epsilon q\\
   &\ge \beta(1-7\alpha\epsilon)q-4\beta\epsilon q
   \ge \beta(1-11\alpha\epsilon)q.
\end{align*}
Substituting in \eqref{min:lemma:kappa3:eq8}, we conclude that
\[
 \kappa_1(w)
 \le
 \frac{\alpha}{\beta}
 \Big(\frac{1+6\alpha\epsilon}{1-11\alpha\epsilon}\Big)
 \kappa_1(q)
 +
 \Big( \frac{\beta-1}{\alpha-1}+1 \Big)c\delta \le \epsilon,
\]
using $\kappa_1(q)\le \epsilon$ and the hypothesis
\eqref{min:lemma:kappa3:eq2}.
\end{proof}

\begin{remark}
Although the above shows that the spectrum $\image^*(\mu_T)$ attached
to a linear map $T\colon\bR^4\to\bR^m$ is not necessarily closed
under the minimum, it is worth looking at conditions on $T$ which
ensures that this property holds.  In his PhD thesis
\cite[Chapter 4]{Ri2019}, the first author shows that it holds
when each component of $T$ achieves its infimum on $\Deltabar$
at the point $E_1$.  An example is the linear map
$T\colon\bR^4\to\bR^3$ given by $T(\ux)=(x_1,x_1+x_2,x_1+x_2+x_3)$.
\end{remark}

%
%

\section{A family of non-semi-algebraic spectra}
\label{sec:non-semi-alg}

Let $n\ge 4$ be an integer and let $\alpha>1$ be a real number.
Consider the linear map $T=(T_1,\dots,T_{n+1})\colon\bR^n\to\bR^{n+1}$ whose
components are given by
\begin{align*}
 T_1(\ux)&=x_1,\\
 T_j(\ux)&=\alpha x_j - x_{j+1} \qquad (2\le j\le n-1),\\
 T_n(\ux)&=x_n-\alpha^{n-3}x_2,\\
 T_{n+1}(\ux)&=x_n-\alpha^{n-2}x_1,
\end{align*}
for any $\ux=(x_1,\dots,x_n)\in\bR^n$.  The goal of this paragraph is to show
that its spectrum $\image^*(\mu_T)$ is not a semi-algebraic subset of $\bR^{n+1}$.
More precisely, we will establish the following result where $\bN_+$ denotes
the set of positive integers.

\begin{theorem}
\label{non-semi-alg:thm}
With the above notation, set $\beta=1+\alpha+\cdots+\alpha^{n-2}$, and let
$E$ denote the set of all real numbers $\theta$ for which there exists a proper
$n$-system $\uP\colon[q_0,\infty)\to\bR^n$ with $\mu_T(\uP)=(\theta,0,\dots,0)$.
Then we have
\begin{equation}
 \label{non-semi-alg:eq:E}
 E=\{0\}\cup\{(1+\alpha^m\beta)^{-1}\,;\,m\in\bN_+\}.
\end{equation}
In particular, $E$ contains infinitely many isolated points.  So, $E$
is not a semi-algebraic subset of $\bR$ and thus
$\image^*(\mu_T)$ is not a semi-algebraic subset of $\bR^{n+1}$.
\end{theorem}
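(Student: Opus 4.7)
The strategy is to prove both inclusions in~\eqref{non-semi-alg:eq:E} and then to derive the non-semi-algebraicity from the isolated nature of the values $\theta_m:=(1+\alpha^m\beta)^{-1}$. For the inclusion $\supseteq$, for each integer $m\ge 1$ I would exhibit an explicit self-similar generalized $n$-system $\uP_m\colon[q_0,\infty)\to\bR^n$ of ratio $\alpha^m$ starting at
\[
q_0=1+\alpha^m\beta,\qquad \uP_m(q_0)=(1,\alpha^m,\alpha^{m+1},\dots,\alpha^{m+n-2}).
\]
Over one period $[q_0,\alpha^m q_0]$ the ball is first carried by $P_1$ from $1$ up to $\alpha^m=P_2(q_0)$, then passed up through $P_2,\dots,P_n$ so that each player in turn multiplies its position by $\alpha$, and (for $m\ge 2$) a reverse cascade finally brings $P_2,\dots,P_{n-1}$ up to their target positions $\alpha^{2m+j-2}$. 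A telescoping sum verifies self-similarity; Lemma~\ref{comput:lemma}(iii) and direct inspection of the division points show that each $T_j$ with $j\ge 2$ is non-negative on $\cF(\uP_m)$ and vanishes at one of them, so $\mu_{T_j}(\uP_m)=0$, while $P_1(q)/q$ attains its minimum on one period at $q=q_0$, giving $\mu_{T_1}(\uP_m)=1/q_0=\theta_m$. Theorem~\ref{comput:thm} then promotes $\uP_m$ to a proper $n$-system with the same $\mu_T$, so $\theta_m\in E$. Compactness of $\image^*(\mu_T)$ (by~\cite{R2017}) combined with the identity $E\times\{0\}^{n}=\image^*(\mu_T)\cap(\bR\times\{0\}^{n})$ makes $E$ compact, so $0=\lim\theta_m\in E$ automatically.

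For the inclusion $\subseteq$, let $\uP$ be a proper $n$-system with $\mu_T(\uP)=(\theta,0,\dots,0)$. Compactness of $\cF(\uP)$ and linearity of each $T_j$ show that the vanishing of $\mu_{T_j}(\uP)$ for $j\ge 2$ forces $\cK(\uP)\subseteq C:=\{\ux\in\Deltabar:T_j(\ux)\ge 0,\ 2\le j\le n+1\}$ with each $T_j$ attaining $0$ somewhere on $\cF(\uP)$. Iterating $x_{j+1}\le\alpha x_j$ and combining with $x_n\ge\alpha^{n-3}x_2$ and $x_n\ge\alpha^{n-2}x_1$ shows that the faces of $C$ on which the various $T_j$ vanish meet in a one-parameter family containing precisely the discrete sequence of vertices
\[
V_m:=\frac{1}{1+\alpha^m\beta}(1,\alpha^m,\alpha^{m+1},\dots,\alpha^{m+n-2}),\qquad m\ge 1,
\]
together with their limit point $V_\infty\in\{x_1=0\}$. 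The rigidity step is to show that for any sequence $q_k\to\infty$ with $P_1(q_k)/q_k\to\theta$, the rescaled shapes $q_k^{-1}\uP(q_k)$ must accumulate on some $V_m$ or on $V_\infty$. Concretely, $\theta>0$ forces $P_1$ to spend long plateaus; the endpoint of each plateau (where $P_1=P_2$) then imposes by~(S3) that every ratio $P_{j+1}/P_j$ at the next critical time equals either $1$ or $\alpha$, and iterating yields $P_2(q_k)/P_1(q_k)\to\alpha^m$ for a unique integer $m\ge 1$. A sum computation analogous to Lemma~\ref{min:lemma:q} then gives $q_k\sim(1+\alpha^m\beta)P_1(q_k)$, whence $\theta=\theta_m$.

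Finally, since $(\theta_m)_{m\ge 1}$ is strictly decreasing with limit $0$, each $\theta_m$ is an isolated point of $E$, so $E$ contains infinitely many isolated points. Any semi-algebraic subset of $\bR$ is a finite union of points and open intervals and hence has only finitely many isolated points; thus $E$ is not semi-algebraic, and neither is $\image^*(\mu_T)$ (by Tarski--Seidenberg applied to the identity $E\times\{0\}^{n}=\image^*(\mu_T)\cap(\bR\times\{0\}^{n})$). The principal obstacle is the rigidity statement in the previous paragraph: the polytope $C$ admits continuous families of near-zero configurations for $T_2,\dots,T_{n+1}$, and eliminating every possibility except the countable family $\{V_m\}$ requires a careful iterative argument in the spirit of the proof of Lemma~\ref{min:lemma:kappa3}, tracking positions of players through many ball transfers and invoking property~(S3) at each step.
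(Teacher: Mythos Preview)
Your overall architecture matches the paper's: explicit self-similar systems for $\supseteq$, a rigidity argument for $\subseteq$, and non-semi-algebraicity from infinitely many isolated points. For $\supseteq$ one correction: after $P_1$ moves up to $\alpha^m$ there is no ``reverse cascade''; the ball runs forward through $P_2,\dots,P_n$, drops back to $P_2$, and this forward cycle repeats $m$ times in all (the paper lists the division points explicitly in its display~\eqref{non-semi-alg:thm:proof:eq7}).

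The real gap is in the $\subseteq$ direction. You propose to track ratios $P_{j+1}/P_j$ inside the given system $\uP$ and invoke (S3) to force them to equal $1$ or $\alpha$ at endpoints of plateaus. But the constraints $T_j\ge 0$ hold only on $\cF(\uP)$, i.e.\ asymptotically; at any finite time the ratios are unconstrained, and (S3) says nothing about their numerical values --- it only says that when the ball is passed forward, the players involved must occupy the same position. The paper resolves this by \emph{passing to a limit}: listing the times $q_1<q_2<\cdots$ where $P_1=P_2$, it rescales $\uP$ to $n$-systems $\uP_i(t)=(a/q_i)\uP(q_i t/a)$ on a fixed compact interval and uses Arzel\`a--Ascoli (Lemma~\ref{non-semi-alg:lemma:uniformlimit}) to extract a uniform limit $\uf$. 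On $\uf$ the constraints become exact: $T_j(t^{-1}\uf(t))\ge 0$ for all $t$ and all $j\ge 2$, and $f_1(t)/t\ge\theta$ with equality at $t=c=1/\theta$. These exact inequalities force, for instance, that whenever $f_1(t)=f_2(t)$ the whole vector $\uf(t)$ is a multiple of $(1,1,\alpha,\dots,\alpha^{n-2})$. The paper then proves that the coincidence sets $S_j=\{t:f_j(t)=f_{j+1}(t)\}$ are finite (this is precisely where $n\ge 4$ is used), so that $\uf$ is itself an $n$-system, and a case analysis of how the ball can move between consecutive points of $\bigcup_j S_j$ pins $\uf$ down completely in terms of a single integer $m\ge 1$, yielding $\theta=(1+\alpha^m\beta)^{-1}$. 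Your proposed iterative approach ``in the spirit of Lemma~\ref{min:lemma:kappa3}'' would have to carry error terms through this entire structural analysis; the compactness step is exactly what lets one avoid that, and it is the key idea missing from your sketch.
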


As the proof will show, a proper $n$-system $\uP$ with
$\mu_T(\uP)=(\theta,0,\dots,0)$ for some $\theta>0$ has a very
constrained behavior.  Its graph decomposes into pieces which,
after rescaling, converge to a graph of the type shown in Figure
\ref{figH}.  We will need the following lemma.

\begin{lemma}
\label{non-semi-alg:lemma:uniformlimit}
Let $a,b\in\bR$ with $0<a<b$, and let $\uP_k\colon[a,b]\to\bR^n$
be an $n$-system on $[a,b]$ for each $k\in\bN_+$.  Then there
exists a subsequence of $(\uP_k)_{k\ge 1}$ which converges
uniformly on $[a,b]$.  Its limit is a continuous function
$\uf=(f_1,\dots,f_n)\colon [a,b]\to\bR^n$ with the following
properties:
\begin{itemize}
 \item[(i)] for $j=1,\dots,n$, its component $f_j\colon[a,b]\to\bR$
  is $1$-Lipschitz and monotone increasing;%
  \smallskip
 \item[(ii)] for each $t\in[a,b]$, we have $0\le f_1(t)\le \cdots\le f_n(t)$
  and $f_1(t)+\cdots+f_n(t)=t$;
 \smallskip
 \item[(iii)] if $f_j<f_{j+1}$ on $(a,b)$ for some
  $j\in\{1,\dots,n-1\}$, then $f_1+\cdots+f_j$ is convex on $[a,b]$
  and piecewise-linear with slopes $0$ then $1$;
 \smallskip
 \item[(iv)] if we have $f_1(t)<f_2(t)<\cdots<f_n(t)$ for all but
  finitely many $t\in [a,b]$, then $\uf$ is an $n$-system on $[a,b]$.
\end{itemize}
\end{lemma}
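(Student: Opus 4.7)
The plan is to apply the Arzel\`a-Ascoli theorem to extract a uniformly convergent subsequence, then to lift the rigidity of the $\uP_k$'s (via property (G3)) onto the limit. Each component $P_{k,j}$ is $1$-Lipschitz by (S2) (its slopes belong to $\{0,1\}$) and satisfies $0\le P_{k,j}(t)\le t\le b$ by (S1), so the family $\{\uP_k\}$ is uniformly bounded and equicontinuous on $[a,b]$; Arzel\`a-Ascoli then yields a uniformly convergent subsequence with continuous limit $\uf$. Properties (i) and (ii) follow immediately: being $1$-Lipschitz, monotone increasing, non-negative, coordinate-wise ordered, and summing to $t$ are all preserved under pointwise (hence uniform) limits.

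The heart of the argument is (iii). Assuming $f_j<f_{j+1}$ on $(a,b)$, fix a compact subinterval $[a',b']\subset(a,b)$: continuity gives $f_{j+1}-f_j\ge\epsilon>0$ on $[a',b']$ for some $\epsilon>0$, and uniform convergence then forces $P_{k,j+1}-P_{k,j}>0$ on $[a',b']$ for all sufficiently large $k$. Since each $\uP_k$ is in particular a generalized $n$-system, (G3) implies that $M_{k,j}:=P_{k,1}+\cdots+P_{k,j}$ is convex on $[a',b']$ with slopes in $\{0,1\}$, hence of the form ``constant on $[a',c_k]$ and slope $1$ on $[c_k,b']$'' for a single transition point $c_k\in[a',b']$ (the endpoints corresponding to the two degenerate cases). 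Extracting a further subsequence with $c_k\to c\in[a',b']$, the uniform limit identifies $f_1+\cdots+f_j$ on $[a',b']$ as constant on $[a',c]$ and of slope $1$ on $[c,b']$. Exhausting $(a,b)$ by compact subintervals and using that this normal form is parametrised by a single transition point, I would extend the description to all of $(a,b)$, and then to $[a,b]$ by continuity.

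For (iv), let $a=t_0<t_1<\cdots<t_r=b$ enumerate the finitely many points at which the strict chain $f_1<\cdots<f_n$ fails, together with the endpoints. Applying (iii) on each $(t_i,t_{i+1})$ to every $j$ yields transition points $\tau_{i,j}\in[t_i,t_{i+1}]$, and the constraint that each $f_j=M_j-M_{j-1}$ have slope in $[0,1]$ (by (i)) forces $\tau_{i,1}\ge\tau_{i,2}\ge\cdots\ge\tau_{i,n-1}$, so on each maximal affine piece of $\uf$ exactly one component has slope $1$. This establishes (S1) and (S2); (S3) then follows from the analysis of generalized $n$-systems carried out in Section~\ref{sec:pgn}: every ``group carrying the ball'' has size one, so whenever the left and right slope-$1$ indices $\ell,k$ at an interior non-differentiable point $q$ satisfy $\ell<k$, the equality $f_\ell(q)=\cdots=f_k(q)$ is automatic. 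The main obstacle is step (iii), where the per-$k$ rigidity (slopes in $\{0,1\}$) on compact subintervals must be upgraded to a global rigidity statement on $(a,b)$; this is handled via the one-parameter description of convex piecewise-linear functions with slopes in $\{0,1\}$, combined with a compactness-and-exhaustion argument for the transition points $c_k$.
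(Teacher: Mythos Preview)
Your proof is correct and follows essentially the same route as the paper's: Arzel\`a--Ascoli for the convergent subsequence, pointwise passage to the limit for (i)--(ii), and for (iii) the key observation that on any compact $[a',b']\subset(a,b)$ one eventually has $P_{k,j}<P_{k,j+1}$, so the partial sums are convex with slopes in $\{0,1\}$ there, which persists in the limit and then extends to $[a,b]$ by exhaustion. The paper simply asserts this last passage to the limit, whereas you make it explicit via the transition points $c_k$; your formulation is slightly more detailed but equivalent. For (iv) the paper says only that it follows from (i)--(iii); your expanded argument via the ordered transition points $\tau_{i,1}\ge\cdots\ge\tau_{i,n-1}$ and the generalized $n$-system analysis of Section~\ref{sec:pgn} is a correct unpacking of that claim.
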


\begin{proof}
The sequence $(\uP_k)_{k\ge 1}$ is equicontinuous and uniformly
bounded on $[a,b]$ because it consists of $1$-Lipschitz maps whose
maximum norm is bounded above by $b$.  By Arzel\`a-Ascoli theorem,
its admits therefore a subsequence that converges uniformly on
$[a,b]$.  Let $\uf=(f_1,\dots,f_n)\colon [a,b]\to\bR^n$ be its
limit.  Then $\uf$ is $1$-Lipschitz on $[a,b]$.  In particular,
it is continuous and each of its components $f_1,\dots,f_n$ are
$1$-Lipschitz on $[a,b]$. The latter are also monotonically
increasing on $[a,b]$ since the components of each $\uP_k$ are
such.  This shows Property (i).  Property (ii) is also
immediate because for each $t\in[a,b]$, the coordinates of
$\uP_k(t)$ form a monotone increasing sequence
$P_{k,1}(t)\le \cdots\le P_{k,n}(t)$ with sum $t$.
Now, suppose that $f_j<f_{j+1}$ on $(a,b)$ for some
$j\in\{1,\dots,n-1\}$, and let $[c,d]$ be any compact subinterval
of $(a,b)$.  Then, for each large enough index $k$, we have
$P_{k,j}<P_{k,j+1}$ on $[c,d]$.  For those $k$, the sum
$P_{k,1}+\cdots+P_{k,j}$ is convex on $[c,d]$ with
slopes $0$ then $1$.  We deduce that $f_1+\cdots+f_j$ is convex
on $[c,d]$ and piecewise-linear with slopes $0$ then $1$.
Property (iii) follows from this by letting $c$ and $d$ go to
$a$ and $b$ respectively.  Finally, (iv) follows from (i), (ii)
and (iii).
\end{proof}

\begin{proof}[Proof of Theorem \ref{non-semi-alg:thm}]
It suffices to prove that $E$ is given by
\eqref{non-semi-alg:eq:E}.  We start by proving that the
non-zero points of $E$ have the form $(1+\alpha^m\beta)^{-1}$
for some positive integer $m$.

Let $\uP=(P_1,\dots,P_n) \colon [0,\infty)\to\bR^n$ be a
proper $n$-system such that $\mu_T(\uP)=(\theta,0,\dots,0)$
for some $\theta>0$. We denote by $q_1<q_2<\cdots$
the sequence of points $q\in[1,\infty)$ for which
$P_1(q)=P_2(q)$, listed in increasing order.  In each open
interval $(q_i,q_{i+1})$, there is a point $r_i$ such that
$P_1$ is constant on $[q_i,r_i]$ while it has slope $1$ on
$[r_i,q_{i+1}]$.  As the ratio $P_1(t)/t$ is bounded above
by $1$ for each $t\ge q_0$, it achieves its minimum on
$[q_i,q_{i+1}]$ at the point $r_i$.  By definition of
$\mu_T(\uP)$, this means that
\begin{equation}
 \label{non-semi-alg:thm:proof:eq1}
 \theta
  = \liminf_{q\to\infty} \frac{P_1(q)}{q}
  = \liminf_{i\to\infty} \frac{P_1(r_i)}{r_i}.
\end{equation}
So, for each sufficiently large $i$, we have
$(\theta/2)r_i < P_1(r_i) = P_1(q_i) \le q_i$, and
therefore $1\le r_i/q_i< 2/\theta$.  It follows that
there exists an infinite subset $I$ of $\bN_+$ and a
real number $\rho\in [1,2/\theta]$ such that
$P_1(r_i)/r_i$ and $r_i/q_i$ converge respectively to
$\theta$ and $\rho$ as $i$ goes to infinity in $I$.  Set
\[
 a=1+\beta, \quad c=\rho a \et b=\frac{2}{\theta}a.
\]
For each $i\in I$, we define an $n$-system
$\uP_i\colon[a,b]\to\bR^n$ by
\[
 \uP_i(t)=\frac{a}{q_i}\uP\Big(\frac{q_i t}{a}\Big)
 \quad (a\le t\le b).
\]
By Lemma \ref{non-semi-alg:lemma:uniformlimit}, there is
an infinite subset $I'$ of $I$ such that $P_i$
converges uniformly to a continuous map $\uf=(f_1,\dots,f_n)\colon[a,b]\to\bR^n$
as $i$ goes to infinity in $I'$.  We will show
that $c=1/\theta>a$ and that the restriction
of $\uf$ to $[a,c]$ is an $n$-system
which is uniquely determined by $\theta$.
Then, from the explicit form of $\uf$, we will deduce that
$\theta=(1+\alpha^m\beta)^{-1}$ for some
integer $m\ge 1$.  For the proof, we use freely
the fact that the restriction of $\uf$ to
any closed subinterval of $[a,b]$ satisfies the
properties (i) to (iv) in Lemma
\ref{non-semi-alg:lemma:uniformlimit}.

For each $i\in I$, we note that
\[
 P_{i,1}(a)
   =\frac{a}{q_i}P_1(q_i)
   =\frac{a}{q_i}P_2(q_i)
   =P_{i,2}(a).
\]
Since $P_1$ is $1$-Lipschitz, we also have
\begin{align*}
 P_{i,1}(c)
  = \frac{a}{q_i}P_1(\rho q_i)
  &= \frac{a}{q_i}P_1(r_i+o(q_i))\\
  &= \frac{a}{q_i}P_1(r_i)+o(1)
   = \left\{
      \begin{aligned}
      &\frac{a}{q_i}P_1(q_i)+o(1) = P_{i,1}(a) +o(1),\\
      &a\frac{r_i}{q_i}\frac{P_1(r_i)}{r_i}+o(1)
        = c\theta+o(1).
      \end{aligned}
    \right.
\end{align*}
By passing to the limit as $i$ goes to infinity in $I'$, these
estimates give
\begin{equation}
 \label{non-semi-alg:thm:proof:eq2}
 f_1(a)=f_2(a)
 \et
 f_1(c)=f_1(a)=c\theta.
\end{equation}
As $f_1$ is monotone increasing, the second set of equalities
implies that $f_1$ is constant on $[a,c]$.
Moreover, for fixed $t\in [a,b]$, the ratio $q_it/a$ tends to
infinity with $i$.  So, the hypothesis that $\mu_T(\uP)=(\theta,0,\dots,0)$
yields
\[
 \liminf_{i\to\infty} T\Big(\frac{1}{t}\uP_i(t)\Big)
  = \liminf_{i\to\infty} T\Big(\frac{a}{q_i t}\uP\Big(\frac{q_i t}{a}\Big)\Big)
  \ge (\theta,0,\dots,0),
\]
thus $T(t^{-1}\uf(t))\ge (\theta,0,\dots,0)$.  Explicitly,
this means that $t^{-1}f_1(t)\ge\theta$ and that
\begin{equation}
 \label{non-semi-alg:thm:proof:eq:main}
 \max\{\alpha^{n-2}f_1(t),\alpha^{n-3}f_2(t)\}
 \le f_n(t)\le \alpha f_{n-1}(t)\le \cdots\le \alpha^{n-2}f_2(t).
\end{equation}
Using \eqref{non-semi-alg:thm:proof:eq2}, we deduce that
\begin{equation}
 \label{non-semi-alg:thm:proof:eq3}
 \theta=\frac{f_1(c)}{c}=\min_{a\le t\le b} \frac{f_1(t)}{t},
\end{equation}
and so $f_1(t)>f_1(c)$ when $t>c$.  When $f_1(t)=f_{2}(t)$, the
inequalities \eqref{non-semi-alg:thm:proof:eq:main} force
$\uf(t)$ to be a multiple of $(1,1,\alpha,\dots,\alpha^{n-2})$.
Since the coordinates of this point sum up to $1+\beta=a$ and
since those of $\uf(t)$ sum up to $t$, we deduce that
\begin{equation}
 \label{non-semi-alg:thm:proof:eq4}
 \uf(t)=\frac{t}{a}(1,1,\alpha,\dots,\alpha^{n-2})
 \quad\text{if $f_1(t)=f_2(t)$.}
\end{equation}
In particular, it follows from \eqref{non-semi-alg:thm:proof:eq2}
that
\begin{equation}
 \label{non-semi-alg:thm:proof:eq5}
 \uf(a)=(1,1,\alpha,\dots,\alpha^{n-2})
 \et
 c=\theta^{-1}.
\end{equation}
Similarly, if $f_j(t)=f_{j+1}(t)$ for some integer $j$ with
$2\le j\le n-1$, these inequalities imply that
\begin{equation}
 \label{non-semi-alg:thm:proof:eq6}
 \uf(t)=(r,s,s\alpha,\dots,s\alpha^{j-2},s\alpha^{j-2},\dots,s\alpha^{n-3})
\end{equation}
for some real numbers $r,s$ with $0\le r\le s$, and thus $0\le r<s$
in view of \eqref{non-semi-alg:thm:proof:eq4}.  From this we infer that,
for each $t\in[a,b]$, there is at most one index $j$ with $1\le j< n$
such that $f_j(t)=f_{j+1}(t)$.

The formula \eqref{non-semi-alg:thm:proof:eq4} implies that the
points $t\in[a,b]$ for which $f_1(t)=f_2(t)$ are isolated.  Indeed, if
$t$ is such a point, then we have $f_2<f_3$ in some connected open
neighborhood $U$ of $t$ in $[a,b]$.  Hence, $f_1+f_2$ is convex
on $U$ with slopes $0$ and $1$ (by Lemma
\ref{non-semi-alg:lemma:uniformlimit} (iii)).  This
implies that $f_1(u)+f_2(u)=2u/a$ for at most two values
of $u\in U$ (by comparing slopes since $0<2/a<1$).  By
\eqref{non-semi-alg:thm:proof:eq4}, these are the only possible
$u\in U$ for which $f_1(u)=f_2(u)$.

By the above observation, since $f_1(a)=f_2(a)$, there exists
a maximal $d$ with $a<d\le b$ such that $f_1<f_2$ on $(a,d)$.
Then $f_1$ is convex on $[a,d]$ with slopes
$0$ and $1$.  As $f_1$ is constant on $[a,c]$ and $f_1(t)>f_1(c)$
when $t>c$, we deduce that $c\in [a,d]$ and that $f_1$ has slope $1$
on $[c,d]$.  Since $f_1+\cdots+f_n$ has slope $1$ on $[a,b]$, it
follows that $f_2+\cdots+f_n$ is constant on $[c,d]$ and so each of
$f_2,\dots,f_n$ are constant on $[c,d]$.  In particular, we deduce that
\begin{align*}
 f_2(d) &= f_2(c) \le f_2(a)+c-a = f_1(a)+c-a,\\
 f_1(d) &= f_1(c)+d-c = f_1(a)+d-c.
\end{align*}
Since $f_1(d)\le f_2(d)$, this yields $d\le 2c-a$. We deduce that
$c>a$ because $d>a$, and also that $d<b$ since $2c=2/\theta<b$.
By the choice of $d$, we conclude that $f_1(d)=f_2(d)$, and so $\uf(d)=(d/a)\uf(a)$.

For each $j=1,\dots,n-1$, let $S_j$ denote the closed
subset of $[a,d]$ consisting of all points $t\in [a,d]$
with $f_j(t)=f_{j+1}(t)$.  By an earlier remark,
these sets are pairwise disjoint. Moreover, we have $S_1=\{a,d\}$
and $S_j\cap[c,d]=\emptyset$ for $j=2,\dots,n-1$.  We claim that, like $S_1$,
the sets $S_2,\dots,S_{n-1}$ are also finite.  As the proof will show, this
is where we need $n\ge 4$.

To prove this claim, fix $j\in\{2,\dots,n-1\}$.
For each $t\in S_j$, we have $t\in(a,c)$, thus $f_1(t)=1$, and
so $\uf(t)$ has the form \eqref{non-semi-alg:thm:proof:eq6}
with $r=1$ and $s=(t-1)/\beta_j$ where
\[
 \beta_j=1+\cdots+\alpha^{j-2}+\alpha^{j-2}+\cdots+\alpha^{n-3}.
\]
If $j>2$, there is a neighborhood $U$ of $t$ in $(a,c)$ on which
$f_2<f_3$.  Then $f_1+f_2$ is convex with slopes $0$ and $1$
on $U$, so $f_1(u)+f_2(u)=1+(u-1)/\beta_j$ has at most
two solutions $u$ in $U$ (because $0<1/\beta_j<1$), and therefore
$U\cap S_j$ consists of at most two points.  Similarly, if $j=2$,
there is a neighborhood $U$ of $t$ in $(a,c)$ on which
$f_3<f_4$.  Then $f_1+f_2+f_3$ is convex with slopes $0$ and $1$
on $U$, so $f_1(u)+f_2(u)+f_3(u)=1+2(u-1)/\beta_2$ has at most
two solutions $u$ in $U$ (because $0<2/\beta_2<1$).  Hence
$U\cap S_2$ consists again of at most two points.  In both cases, this
shows that $S_j$ is a discrete subset of $[a,b]$ and so it is
finite.

Since the set $S:=S_1\cup\cdots\cup S_{n-1}$ is finite, Lemma
\ref{non-semi-alg:lemma:uniformlimit} shows that the restriction
of $\uf$ to $[a,d]$ is an $n$-system.  Let $t_0=a<t_1<\cdots<t_N=d$
be the points of $S$ listed in increasing order.  Fix an integer $i$
with $0\le i<N$ and let $k$ denote the index for which
$t_i\in S_k$.  Then the point $\uf(t_i)$ is given by
\eqref{non-semi-alg:thm:proof:eq6} with $r=1$, $j=k$ and
some value of $s$.  We will show that the restriction of $\uf$
to $H=[t_i,t_{i+1}]$ is entirely determined by $\uf(t_i)$.

We first note that, for each $j=1,\dots,n-1$, we have
$f_j<f_{j+1}$ on $(t_i,t_{i+1})$, thus the sum $f_1+\cdots+f_j$
is convex on $H$ with slopes in $\{0,1\}$.

Suppose first that $k\le n-2$.  Since $f_{k+1}$ has slope $1$
immediately to the right of $t_i$, the sum $f_1+\cdots+f_{k+1}$
has constant slope $1$ on $H$.  So, $f_{k+2}$
is constant on $H$ and we have
\[
 \frac{f_{k+2}(t_{i+1})}{f_{k+1}(t_{i+1})}
 < \frac{f_{k+2}(t_i)}{f_{k+1}(t_i)}
 =\alpha.
\]
Thus $t_{i+1}$ belongs to $S_{k+1}$ and $\uf(t_{i+1})$ is given
by \eqref{non-semi-alg:thm:proof:eq6} with $r=1$, $j=k+1$ and the
same value of $s$ as for $\uf(t_i)$.  We conclude that
$\uf_{k+1}$ has slope $1$ on $H$
while all other components of $\uf$ are constant on $H$.  This
situation is illustrated in Figure \ref{figH} (a).

\begin{figure}[h]
   \begin{tikzpicture}[xscale=0.8,yscale=0.8]
      %
      \draw[dashed] (0,6) -- (0,0) node[below]{$t_{i}$};
      \draw[dashed] (3,6) -- (3,0) node[below]{$t_{i+1}$};
      \draw[thick] (0,4.5) -- (3,4.5) -- (0,1.5) -- (3,1.5);
       \node[left] at (0,1.5) {$s\alpha^{k-2}$};
       \node[left] at (0,4.5) {$s\alpha^{k-1}$};
       \node[above] at (1.5,4.5) {$f_{k+2}$};
       \node[below] at (1.5,1.5) {$f_k$};
       \node at (1.5,0.3) {$\cdots$};
       \node at (1.5,5.6) {$\cdots$};
       \node[left] at (1.5,3.2) {$f_{k+1}$};
      \node[below] at (1.5,-1) {(a) $k\le n-2$};
      %
      \draw[dashed] (6,6) -- (6,0) node[below]{$t_{i}$};
      \draw[dashed] (9,6) -- (9,0) node[below]{$t_{i+1}$};
      \draw[thick] (6,0.5) -- (9,0.5);
       \node[below] at (8.2,0.5) {$f_1$};
       \node[left] at (6,0.5) {$1$};
       \node[right] at (9,0.5) {$1$};
      \draw[thick] (6,1.2) -- (7.5,1.2) -- (9,2.7) -- (6,2.7);
       \node[right] at (7.7,1.4) {$f_2$};
       \node[left] at (6,1.2) {$s$};
       \node[left] at (6,2.7) {$s\alpha$};
       \node[right] at (9,2.7) {$s\alpha$};
       \node[below] at (7,2.7) {$f_3$};
      \draw[thick] (9,4.5) -- (6,4.5) -- (7.5,6) -- (9,6);
       \node[left] at (6,4.5) {$s\alpha^{n-3}$};
       \node[right] at (9,6) {$s\alpha^{n-2}$};
       \node[right] at (9,4.5) {$s\alpha^{n-3}$};
       \node[above] at (8.2,4.5) {$f_{n-1}$};
       \node[left] at (7.05,5.7) {$f_{n}$};
       \node at (6.7,3.6) {$\cdots$};
       \node at (8.3,3.6) {$\cdots$};
      \draw[dashed] (7.5,6) -- (7.5,0) node[below] {$u$};
      \node[below] at (7.5,-1) {(b) $k=n-1$ and $\ell=2$};
      %
      \draw[dashed] (13,6) -- (13,0) node[below]{$t_{i}$};
      \draw[dashed] (16,6) -- (16,0) node[below]{$t_{i+1}$};
      \draw[dashed] (14,6) -- (14,0) node[below] {$c$};
      \draw[thick] (13,0.5) -- (14,0.5) -- (16,2.5) -- (13,2.5);
       \node[below] at (15,1.5) {$f_1$};
       \node[below] at (13.5,2.5) {$f_2$};
       \node[left] at (13,0.5) {$1$};
       \node[right] at (16,2.5) {$s$};
       \node[left] at (13,2.5) {$s$};
      \draw[thick] (16,6) -- (14,6) -- (13,5) -- (16,5);
       \node[left] at (13,5) {$s\alpha^{n-3}$};
       \node[right] at (16,5) {$s\alpha^{n-3}$};
       \node[right] at (16,6) {$s\alpha^{n-2}$};
       \node[below] at (15,5) {$f_{n-1}$};
       \node[below] at (15,6) {$f_{n}$};
       \node at (15,3.7) {$\cdots$};
      \node[below] at (14.5,-1) {(c) $k=n-1$ and $\ell=1$};
  \end{tikzpicture}
\caption{All possibilities for the graph of $\uf$ over $[t_i,t_{i+1}]$.}
\label{figH}
\end{figure}
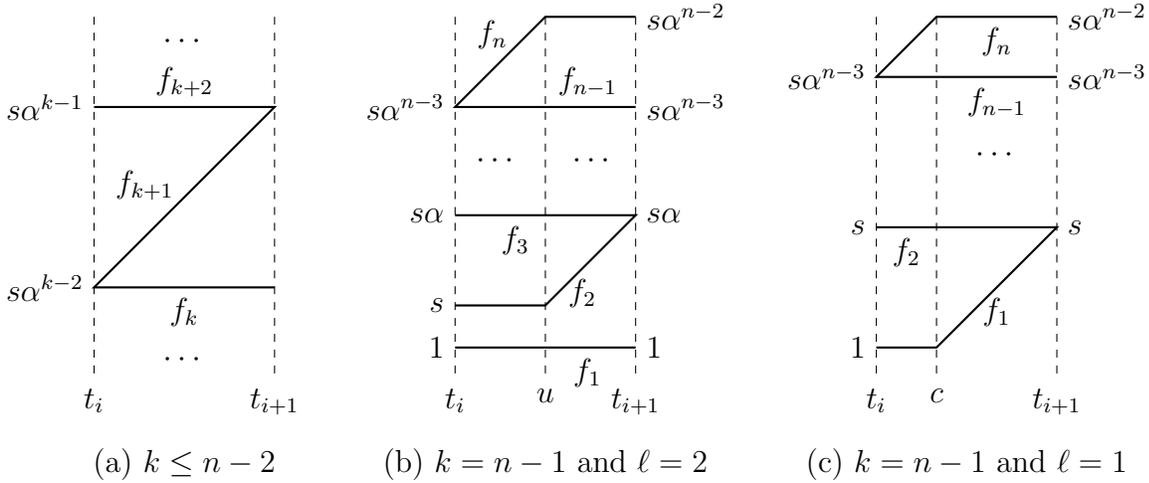

Suppose now that $k=n-1$.  Let $u$ be the largest point of $(t_i,t_{i+1}]$
such that $f_n$ has slope $1$ on $[t_i,u]$.  Since $t_{i+1}\in S$
we must have $u < t_{i+1}$.  Then, $f_1+\cdots+f_{n-1}$
changes slope from $0$ to $1$ at the point $u$ and thus $f_n$
is constant on $[u,t_{i+1}]$.  Let $\ell$ be the smallest index
with $1\le\ell\le n-1$ such that $f_1+\cdots+f_\ell$ changes slope
from $0$ to $1$ at $u$.  Then $f_j$ is constant on $H$
for each $j$ with $\ell<j<n$.  If $\ell=1$, we must have $u=c$
and $t_{i+1}=d\in S_1$.  This is illustrated in Figure
\ref{figH} (c).  Suppose now that $\ell\ge 2$.
Let $v$ be the largest element of $(u,t_{i+1}]$ such
that $f_1+\cdots+f_{\ell-1}$ is constant on $[t_i,v]$.  Then
$f_\ell$ is constant on $[t_i,u]$, has slope $1$ on $[u,v]$
and is constant on $[v,t_{i+1}]$, while $f_{\ell+1}$ is
constant on $[u,t_{i+1}]$.  Applying the main
inequalities \eqref{non-semi-alg:thm:proof:eq:main}
with $t=u$, we deduce that
\[
 \alpha
  \ge \frac{f_{\ell+1}(u)}{f_\ell(u)}
  > \frac{f_{\ell+1}(u)}{f_\ell(v)}
  = \frac{f_{\ell+1}(t_{i+1})}{f_\ell(t_{i+1})},
\]
thus $v=t_{i+1}\in S_{\ell}$.  In particular,
$f_{\ell-1}$ is constant on $[t_i,v]=[t_i,t_{i+1}]$.
If $\ell>2$, this yields
\[
 \alpha
 = \frac{f_\ell(t_i)}{f_{\ell-1}(t_i)}
 < \frac{f_\ell(t_{i+1})}{f_{\ell-1}(t_{i+1})}
\]
which is impossible.  Hence, we must have $\ell=2$ and all components
of $\uf$ other than $f_2$ and $f_n$ are constant on $H=[t_i,t_{i+1}]$.
This is illustrated in Figure \ref{figH} (b).

By the above analysis, the points $\uf(t_i)$ listed
according to their index $i$ are
\begin{equation}
 \label{non-semi-alg:thm:proof:eq7}
 \begin{aligned}
 &\uf(a)=(1,1,\alpha,\dots,\alpha^{n-2}),\\
 &(1,\alpha,\alpha,\dots,\alpha^{n-2}),
  (1,\alpha,\alpha^2,\alpha^2\dots,\alpha^{n-2}),
  \dots,
  (1,\alpha,\dots,\alpha^{n-2},\alpha^{n-2}),\\
 &\qquad \cdots\\
 &(1,\alpha^m,\alpha^m,\dots,\alpha^{m+n-3}),
  (1,\alpha^m,\alpha^{m+1},\alpha^{m+1}\dots,\alpha^{m+n-3}),
  \dots\\
 &\hspace*{200pt}
  \dots,
  (1,\alpha^m,\dots,\alpha^{m+n-3},\alpha^{m+n-3}),\\
 &\uf(d)=(\alpha^m,\alpha^m,\dots,\alpha^{m+n-2}),
 \end{aligned}
\end{equation}
for some integer $m\ge 1$.  The combined graph of $\uf$
on $[a,d]$ (the union of the graphs of its components)
is shown in Figure \ref{fig:pseudoregular} for the case
where $n=4$ and $m=3$.  The switch points of $\uf$ on $(a,d)$ are
\begin{equation}
 \label{non-semi-alg:thm:proof:eq8}
 (1,\alpha,\alpha^2,\dots,\alpha^{n-1}),\,
 (1,\alpha^2,\alpha^3,\dots,\alpha^{n}),\,
 \dots,\,
 (1,\alpha^m,\alpha^{m+1},\dots,\alpha^{m+n-2}).
\end{equation}
In particular, the last switch point is $\uf(c)$, and so we get
\[
 \theta=c^{-1}
   =(1+\alpha^m+\cdots+\alpha^{m+n-2})^{-1}
   =(1+\alpha^m\beta)^{-1}.
\]
This shows that $E\subseteq \{0\} \cup
\{(1+\alpha^m\beta)^{-1}\,;\,m\in\bN_+\}$.

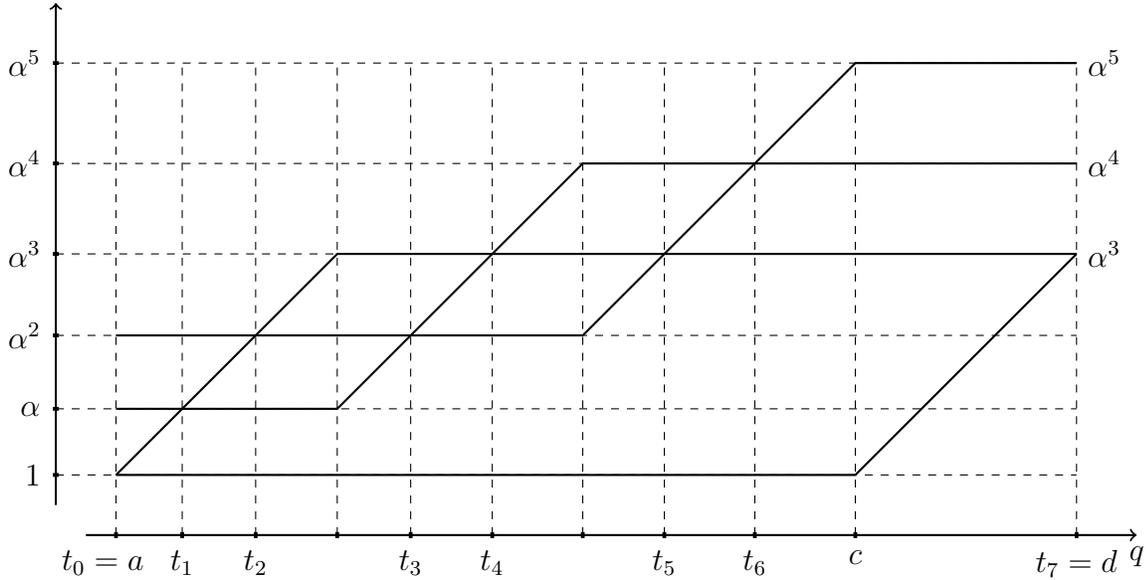
\begin{figure}[h]
   \begin{tikzpicture}[xscale=0.8,yscale=0.8]
      \pgfmathparse{1.11}\let\r\pgfmathresult;
      %
      \draw[line width=0.05cm]
        (0.05,10-9)--(-0.05,10-9)
        node[left]{$1$};
      \draw[line width=0.05cm]
        (0.05,10*\r-9)--(-0.05,10*\r-9)
        node[left]{$\alpha$};
      \foreach \q in {2,...,5}
         \draw[line width=0.05cm]
           (0.05,10*\r^\q-9)--(-0.05,10*\r^\q-9)
           node[left]{$\alpha^\q$};
      \draw[->,thick] (0,0.5)--(0,10*\r^5-9+1);
      %
      \pgfmathparse{1}
          \let\ti\pgfmathresult;
      \draw[line width=0.05cm]
        (\ti,0.05)--(\ti,-0.05)
        node[below]{$t_0=a$\hspace*{10pt}};
      \draw[line width=0.05cm]
        (\ti+10*\r-10,0.05)--(\ti+10*\r-10,-0.05)
        node[below]{$t_1$};
      \draw[line width=0.05cm]
        (\ti+10*\r^2-10,0.05)--(\ti+10*\r^2-10,-0.05)
        node[below]{$t_2$};
      \pgfmathparse{\ti+10*(\r^3-1)}
          \let\tii\pgfmathresult;
      \draw[line width=0.05cm]
        (\tii,0.05)--(\tii,-0.05);
      \draw[line width=0.05cm]
        (\tii+10*\r^2-10*\r,0.05)--(\tii+10*\r^2-10*\r,-0.05) node[below]{$t_3$};
      \draw[line width=0.05cm]
        (\tii+10*\r^3-10*\r,0.05)--(\tii+10*\r^3-10*\r,-0.05) node[below]{$t_4$};
      \pgfmathparse{\tii+10*(\r^4-\r)}
          \let\tiii\pgfmathresult;
      \draw[line width=0.05cm]
        (\tiii,0.05)--(\tiii,-0.05);
      \draw[line width=0.05cm]
        (\tiii+10*\r^3-10*\r^2,0.05)--(\tiii+10*\r^3-10*\r^2,-0.05) node[below]{$t_5$};
      \draw[line width=0.05cm]
        (\tiii+10*\r^4-10*\r^2,0.05)--(\tiii+10*\r^4-10*\r^2,-0.05) node[below]{$t_6$};
      \pgfmathparse{\tiii+10*(\r^5-\r^2)}
          \let\tiv\pgfmathresult;
      \draw[line width=0.05cm]
        (\tiv,0.05)--(\tiv,-0.05)
        node[below]{$c$};
      \pgfmathparse{\tiv+10*(\r^3-1)}
          \let\d\pgfmathresult;
      \draw[line width=0.05cm]
        (\d,0.05)--(\d,-0.05) node[below]{$t_7=d$};
      \draw[->,thick] (0.5,0)--(\d+1,0) node[below]{$q$};
      %
      \draw[thick] (\ti,1) -- (\tiv,1) -- (\d,10*\r^3-9)
        node[right]{$\alpha^3$};
      \draw[thick] (\ti,1) -- (\tii,10*\r^3-9) -- (\d,10*\r^3-9);
      \draw[thick] (\ti,10*\r-9) -- (\tii,10*\r-9)
        -- (\tiii,10*\r^4-9) -- (\d,10*\r^4-9)
        node[right]{$\alpha^4$};
      \draw[thick] (\ti,10*\r^2-9) -- (\tiii,10*\r^2-9)
        -- (\tiv,10*\r^5-9) -- (\d,10*\r^5-9)
        node[right]{$\alpha^5$};
      \draw[dashed]
        (\ti,0)--(\ti,10*\r^5-9);
      \draw[dashed]
        (\ti+10*\r-10,0)--(\ti+10*\r-10,10*\r^5-9);
      \draw[dashed]
        (\ti+10*\r^2-10,0)--(\ti+10*\r^2-10,10*\r^5-9);
      \draw[dashed]
        (\tii,0)--(\tii,10*\r^5-9);
      \draw[dashed]
        (\tii+10*\r^2-10*\r,0)--(\tii+10*\r^2-10*\r,10*\r^5-9);
      \draw[dashed]
        (\tii+10*\r^3-10*\r,0)--(\tii+10*\r^3-10*\r,10*\r^5-9);
      \draw[dashed]
        (\tiii,0)--(\tiii,10*\r^5-9);
      \draw[dashed]
        (\tiii+10*\r^3-10*\r^2,0)--(\tiii+10*\r^3-10*\r^2,10*\r^5-9);
      \draw[dashed]
        (\tiii+10*\r^4-10*\r^2,0)--(\tiii+10*\r^4-10*\r^2,10*\r^5-9);
      \draw[dashed]
        (\tiv,0)--(\tiv,10*\r^5-9);
      \draw[dashed]
        (\d,0)--(\d,10*\r^5-9);
      \foreach \y in {0,1,...,5}{
      \draw [dashed] (0,10*\r^\y-9) -- (\d,10*\r^\y-9);}
  \end{tikzpicture}
\caption{The combined graph of $\uf$ on $[a,d]$ for $n=4$ and $m=3$.}
\label{fig:pseudoregular}
\end{figure}

Conversely, for each integer $m\ge 1$, there is
a unique $n$-system $\uf$ on $[a,d]$ with $a=1+\beta$ and
$d=\alpha^ma$, whose division points are given by
\eqref{non-semi-alg:thm:proof:eq7} and
\eqref{non-semi-alg:thm:proof:eq8}.  The first component
$f_1$ of that $n$-system is constant on $[a,c]$ where
$c=1+\alpha^m\beta$, and it has slope $1$ on $[c,d]$.  Therefore
the minimum of $f_1(t)/t$ on $[a,d]$ is $1/c$, achieved at $t=c$.
Moreover, one verifies that $\uf$ satisfies the main conditions
\eqref{non-semi-alg:thm:proof:eq:main} at each $t\in [a,d]$.  More
precisely, we find that
\[
 \min\{t^{-1}T(\uf(t))\,;\, a\le t\le d\} = (c^{-1},0,\dots,0).
\]
Finally, we note that $\uf$ extends uniquely to an $n$-system
on $[a,\infty)$ such that $\uf(\alpha^mt)=\alpha^m\uf(t)$ for each $t\ge a$.
This $n$-system is proper with $\mu_T(\uf)=(c^{-1},0,\dots,0)$.
Thus the set $E$ contains $c^{-1}=(1+\alpha^m\beta)^{-1}$
for each $m\ge 1$.
Since $E$ is a closed subset of $\bR$, it also contains $0$.
This completes the proof of \eqref{non-semi-alg:eq:E} and so
proves the theorem.
\end{proof}

\subsection*{Acknowlegments.}
The research of both authors was partially supported
by the NSERC Discovery grant of the second author.  The first author
was also supported by an Ontario graduate scholarship.

\end{document}